\begin{document}

\title{Symbolic approach and induction in the Heisenberg group}
\author{{\Large Jean-François Bertazzon \footnote{Adresse e-mail: bertazzon@iml.univ-mrs.fr}} \\ {\it \small Institut de Mathématiques de Luminy (UMR 6206),} \\  {\it \small Université de la Méditérranée,} \\ {\it \small Campus de Luminy, 13288 MARSEILLE Cedex 9, France} }

\newenvironment{abst}{%
\hspace{+2.5\parindent}\begin{minipage}{.85\linewidth} {\textbf{Abstract.}}}
{\end{minipage}\par}

\newtheorem{theor}{Theorem}
\renewcommand{\thetheor}{\Alph{theor}}

\newtheorem{theoreme}{Theorem}

\newtheorem{lemme}{Lemma}
\newtheorem{proposition}{Proposition}

\newcommand{\floor}[1]{{\left\lfloor #1 \right\rfloor}}



\maketitle

\begin{abst}\small{
We associate a homomorphism  in the Heisenberg group
to each hyperbolic unimodular automorphism of the free group on two generators.
We show that the first return-time of some flows in "good" sections, 
are conjugate to niltranslations, which have the property of being self-induced.
}
\end{abst}
\mbox{}
\vspace{6pt}
\\
We introduce an extension of the well known connection between
geometric systems and symbolic systems (broken lines, Rauzy fractals, etc ),
to the non commutative setting, i.e. Heisenberg group.
The symbolic objects considered are automorphisms of the free group on $m$ generators
$\mathbb F_m$.
If $G$ is an arbitrary group generated by $m$ generators, then there exists a surjective group homomorphism $\pi$ mapping $\mathbb F_m$ onto $G$.
The goal is then to translate the action of 
an automorphism $\sigma$ on $\mathbb F_m$, in an application on the group $G$, through the application $\pi$.
\vspace{6pt}
\\
Let $\sigma$ be a  \textbf{substitution} on $m$ letters,
i.e. a positive endomorphisms of the free group on $m$ generators $\mathbb F_m$.
Suppose there exists an unique infinite word
$\boldsymbol{u}=(u_n)_{n\in \mathbb N}$ such that
$\sigma(\boldsymbol{u})=\boldsymbol{u}$.
A natural construction exists, which associates to the substitution $\sigma$,
a sequence $(x_n)_n$ of elements of $G$, such that the $n$th term of the sequence,
is the projection by $\pi$ of the prefix $u_0 \dots u_n$.
(i.e. $ x_n = \pi (u_0 \dots u_n) $). This sequence of elements is called the \textbf{broken line}
in $ G $ associated to the substitution $\sigma$.
When $ G $ is the group $ \mathbb Z ^m $, the abelianisation of the free group on $ m $ generators,
under some assumptions on the substitution, the closure of a projection of the broken line
is a compact set of $ \mathbb R ^ {m-1} $, called a \textbf{Rauzy fractal}.
We can then measurably conjugate the symbolic dynamical system generated by $\boldsymbol{u}$,
with an exchange of pieces of this fractal.
There are many generalizations of this construction, especially for
free groups with more generators
(see \cite {PierreX}, \cite{PierreA} and \cite{MR1879664}).
\vspace {6pt}
\\
Another method is to translate the action of the substitution to 
the group $G$, in a way which is consistent with the morphism $ \pi $.
A topological group $ G $ with $ m $ generators will be called
\textbf{adapted for automorphisms of the free group $\mathbb F_m $}
endowed with its natural topology,
if the map $ \pi $ is continuous and if there is a continuous and surjective homomorphism
$ \mathfrak S $ : Aut $ (\mathbb F_m, \mathbb F_m) \longrightarrow $ Aut$(G, G) $
 such that for any $ \sigma \in$ Hom$ (\mathbb F_m, \mathbb F_m)$, the following diagram commutes :
$$
 \xymatrix{
    \mathbb F_m  \ar@{>}[r]^{\sigma} \ar@{>}[d]_{\pi} 
    & \mathbb F_m   \ar@{>}[d]^{\pi} \\
    G \ar@{>}[r]_{\mathfrak S _\sigma} 
    &G
  }
$$
We shall then say that $ \mathfrak S _ \sigma $ is the \textbf{factorization} of $ \sigma $.
Since $\pi$ is surjective, it is possible that the same
morphism is associated with two different automorphisms.
It is important to note right now, that with the point of view
that we adopt in this work,
two automorphisms that can be factored in the same way on $ G $,
will be indistinguishable.
Let $u$ and $v$ be two elements of the free group $ \mathbb F_m $.
We denote by $ [u, v] = uv u^ {-1} v ^ {-1} $ the \textbf {commutator} of $ u $ and $ v $.
The endomorphisms $ \sigma $ of free groups satisfy the relation:
$$
\sigma ([u, v]) = [\sigma (u), \sigma (v)].
$$
This relation gives hope to obtain interesting results
considering nilpotent  groups,
defined from relations
of commutators in the free group.
\vspace {6pt}
\\
We consider the situation where $G$ is the discret Heisenberg group
and will give some results for the group $ \mathbb F_2 $.
We begin by recalling some results
related to the Heisenberg group in Section \ref{intro:se:heis}
and we introduce transformations of
this space such as the \textbf{nilflows}
and the \textbf{niltranslations}.

\begin{proposition}
\label{nil:prop:autosubs}
The set of matrices with integer coefficients
form a lattice of the Heisenberg group
which is adapted for automorphisms of the free group $\mathbb F_2$.
\end{proposition}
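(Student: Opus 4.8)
The plan is to realize the integer Heisenberg group $H = H(\mathbb{Z})$ as the free $2$-step nilpotent group on two generators and to read the factorization $\mathfrak S$ off the functoriality of the lower central series. First I would fix the standard generators $X, Y$ of $H$ (the elementary integer matrices with a single off-diagonal $1$), set $\pi(a) = X$ and $\pi(b) = Y$ on the generators $a, b$ of $\mathbb F_2$, and extend $\pi$ multiplicatively; surjectivity of $\pi$ is immediate since $X, Y$ generate $H$. The lattice claim is the standard observation that integer entries make $H$ discrete in the real Heisenberg group while $[0,1)^3$ provides a bounded fundamental domain, so the quotient is compact. Continuity of $\pi$ is automatic because $\mathbb F_2$ and $H$ both carry the discrete topology.

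The key step is to identify $\ker\pi$. A direct computation gives $[X,Y] = Z$, the central generator, and shows that $Z$ is central with $X, Y$ subject to exactly the relations $[X,[X,Y]] = [Y,[X,Y]] = 1$; hence $H \cong \mathbb F_2/\gamma_3(\mathbb F_2)$, where $\gamma_3$ is the third term of the lower central series, so that $\ker\pi = \gamma_3(\mathbb F_2)$. The point is that $\gamma_3$ is fully invariant, i.e. stable under every endomorphism: iterating the identity $\sigma([u,v]) = [\sigma(u),\sigma(v)]$ recorded above yields $\sigma(\gamma_k)\subseteq\gamma_k$ for all $k$. Consequently every $\sigma \in \mathrm{Hom}(\mathbb F_2,\mathbb F_2)$ descends to the quotient, and $\mathfrak S_\sigma(\pi(w)) := \pi(\sigma(w))$ is well defined precisely because $\sigma(\ker\pi)\subseteq\ker\pi$. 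This is exactly the commutativity $\pi\circ\sigma = \mathfrak S_\sigma\circ\pi$ of the required diagram, and functoriality of the quotient makes $\sigma\mapsto\mathfrak S_\sigma$ a homomorphism carrying automorphisms to automorphisms.

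It remains to prove surjectivity of $\mathfrak S$. Since $H$ is free in the variety of $2$-step nilpotent groups, any self-map of $H$ is determined by arbitrary images of $X, Y$; choosing $w_1, w_2 \in \mathbb F_2$ with $\pi(w_1) = \phi(X)$, $\pi(w_2) = \phi(Y)$ and setting $\sigma(a) = w_1$, $\sigma(b) = w_2$ gives $\mathfrak S_\sigma = \phi$, which already settles the endomorphism case. For the automorphism statement I would pass to the abelianization: both $\mathrm{Aut}(\mathbb F_2)$ and $\mathrm{Aut}(H)$ surject onto $GL_2(\mathbb{Z})$ compatibly with $\mathfrak S$, with kernels $\mathrm{Inn}(\mathbb F_2)$ and $\mathrm{Inn}(H)$ respectively. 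Here one checks, by the short computation that an automorphism trivial on $H^{\mathrm{ab}} = \mathbb{Z}^2$ must send $X\mapsto XZ^j$, $Y\mapsto YZ^k$ and that $[XZ^j, YZ^k] = [X,Y]$ forces these to be exactly the inner automorphisms. Because $\pi$ is surjective, $\mathfrak S$ maps $\mathrm{Inn}(\mathbb F_2)$ onto $\mathrm{Inn}(H)$, and a short-five-lemma chase then yields surjectivity onto $\mathrm{Aut}(H)$; continuity of $\mathfrak S$ is free, the relevant monoids being discrete.

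I expect the main obstacle to be this surjectivity onto $\mathrm{Aut}(H)$: it rests on the structural facts that automorphisms of $H$ trivial on the abelianization are inner and that $\mathrm{Aut}(\mathbb F_2)\to GL_2(\mathbb{Z})$ is onto. The conceptual crux, however, is the identification $\ker\pi = \gamma_3(\mathbb F_2)$ together with its full invariance, which is what makes the factorization exist at all and explains why the Heisenberg group is the right non-commutative target for $\mathbb F_2$.
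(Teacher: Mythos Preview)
Your argument is correct, but it follows a markedly different route from the paper's. The paper proceeds entirely by hand: it defines $\mathfrak S_\sigma$ on generators, checks it is an automorphism via $\det M_\sigma=\pm 1$, and then for surjectivity it explicitly reduces an arbitrary automorphism of $\boldsymbol{\Gamma}$ using concrete Nielsen-type automorphisms $\sigma_1,\dots,\sigma_4$ of $\mathbb F_2$ until the images of $\boldsymbol n_a,\boldsymbol n_b$ differ from $\boldsymbol n_a,\boldsymbol n_b$ only by central factors, and finishes with the inner automorphisms $\sigma_5,\sigma_6$. Your proof instead identifies $\boldsymbol{\Gamma}$ with $\mathbb F_2/\gamma_3(\mathbb F_2)$, uses full invariance of $\gamma_3$ to obtain the factorization for free, and handles surjectivity structurally via the two short exact sequences $1\to\mathrm{Inn}\to\mathrm{Aut}\to GL_2(\mathbb Z)\to 1$ and a five-lemma chase.

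The trade-off is clear. The paper's approach is constructive: given $\phi\in\mathrm{Aut}(\boldsymbol{\Gamma})$ it produces an explicit word in elementary automorphisms mapping to it, which is useful downstream when one wants to compute with specific substitutions. Your approach is cleaner and explains \emph{why} the Heisenberg group is the right target---it is the universal $2$-step nilpotent quotient---and it generalises in principle to higher-step free nilpotent quotients. The cost is that you import two classical facts as black boxes: Nielsen's theorem that $\mathrm{Out}(\mathbb F_2)\cong GL_2(\mathbb Z)$, and the computation that automorphisms of $\boldsymbol{\Gamma}$ trivial on the abelianisation are exactly inner. Both are true and well known (your sketch of the second is fine), but note that the paper's explicit generators effectively reprove the surjectivity half of Nielsen's theorem rather than citing it.
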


\noindent
There is no object known at this time, which corresponds to the Rauzy fractal. But we believe such an object exists.
We obtain some results in this direction in Section \ref{nil:se:ex}, where we study a family of niltranslations connected with the "Fibonacci substitution".
The fact that these niltranslations come from substitutions, yields self-induced dynamical systems. 
The self-induction property corresponds to the self-similarity of the
Rauzy fractal under renormalization.
We show:

\begin{proposition}
\label{nil:th:rautoin}
Let $\phi$ be the golden mean.
The dynamical system given by the application:
$$
\begin{array}{cccc}
\left( \mathbb R / \mathbb Z \right)^2 & \longrightarrow & \left( \mathbb R / \mathbb Z \right)^2 \\
(y,z) & \mapsto & \left( y+\frac{1}{\phi^2}  \mbox{ , }  z+y -\frac{1}{2\phi^3}  \right)\\
\end{array}
\mbox{ is self-induced, minimal and uniquely ergodic. }
 $$
\end{proposition}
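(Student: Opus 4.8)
The plan is to dispatch minimality and unique ergodicity by a direct equidistribution argument, and to obtain self-induction from the substitutive origin of the map together with the factorization $\mathfrak S_\sigma$ furnished by Proposition~\ref{nil:prop:autosubs}. Write $T(y,z)=(y+\alpha,\,z+y+\beta)$ with $\alpha=1/\phi^2$ and $\beta=-1/(2\phi^3)$.

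For unique ergodicity I would first record the closed form of the orbit,
$$
T^n(y,z)=\Bigl(y+n\alpha,\; z+ny+\binom{n}{2}\alpha+n\beta\Bigr),
$$
whose second coordinate is a genuine quadratic polynomial in $n$. Testing against a character $\chi_{p,q}(y,z)=e^{2\pi i(py+qz)}$, the coefficient of $n^2$ in the exponent is $q\alpha/2$, which is irrational whenever $q\neq 0$ because $\alpha=1/\phi^2$ is irrational; Weyl's theorem on the equidistribution of polynomial sequences then forces $\frac1N\sum_{n<N}\chi_{p,q}(T^n(y,z))\to 0$ uniformly. For $q=0,\ p\neq 0$ the same conclusion follows from equidistribution of the irrational rotation by $p\alpha$. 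Since the averages of every nonconstant trigonometric polynomial thus converge uniformly to its Lebesgue integral, $T$ is uniquely ergodic with Lebesgue measure as its unique invariant measure; as this measure has full support, the unique minimal set is the whole torus and minimality follows.

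For self-induction I would exploit that $T$ is exactly the niltranslation attached, via the broken-line construction and the commuting diagram of the introduction, to the Fibonacci substitution $\sigma:1\mapsto 12,\ 2\mapsto 1$, whose incidence matrix has dominant eigenvalue $\phi$ and whose induced rotation number on the first coordinate is $\alpha=1/\phi^2$. By Proposition~\ref{nil:prop:autosubs} this $\sigma$ admits a factorization $\mathfrak S_\sigma$ as an automorphism of the integer Heisenberg group; having integer coefficients, it descends to an affine map $\Phi$ of the nilmanifold $(\mathbb R/\mathbb Z)^2$ whose linear part rescales $y$ by a suitable power of $\phi$ and acts on the central coordinate $z$ through the corresponding Heisenberg automorphism. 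The candidate self-induction is then to restrict $T$ to a sub-cylinder $Y=I\times(\mathbb R/\mathbb Z)$, where $I$ is the Fibonacci renormalization interval (of length $1/\phi$) for the rotation by $\alpha$, and to show that $\Phi$ conjugates the first-return map $T_Y$ to $T$ itself.

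The concrete verification splits into two layers. On the first coordinate, the first-return map of the rotation by $\alpha$ to $I$ is, after rescaling by $\Phi$, again the rotation by $\alpha$: this is the classical self-induction of the golden rotation, with return time taking the two values given by consecutive Fibonacci numbers according to the letter read in the fixed point $\boldsymbol u$. On the second coordinate I would substitute these two return times into the quadratic orbit formula, collect the increment of $z$, and check that after applying the $z$-part of $\Phi$ the result is precisely $z+y+\beta$. I expect the main obstacle to lie exactly here: the quadratic term $\binom{n}{2}\alpha$ and the precise constant $\beta=-1/(2\phi^3)$ must conspire so that the induced cocycle on $z$ matches the original one exactly, not merely up to a coboundary. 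Carrying this out relies on the golden-mean identities $\phi^2=\phi+1$ and $1/\phi=\phi-1$ to reduce the accumulated half-integer and ``area'' contributions, and it is precisely the choice of the constant $-1/(2\phi^3)$ that renders the matching exact and thereby yields the stated self-induction.
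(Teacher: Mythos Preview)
Your treatment of minimality and unique ergodicity is correct and is different from the paper's. The paper realizes the map as a skew product $(y,z)\mapsto(y-1/\phi^2,\,z+\psi(y))$ with $\psi$ a continuous Lipschitz circle map of degree~$1$, and then quotes Furstenberg's theorem; your direct Weyl argument via the closed orbit formula and characters is a clean alternative that avoids the coboundary step.

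The self-induction part, however, has a real gap. You assert that $\mathfrak S_\sigma$ ``descends to an affine map $\Phi$ of the nilmanifold $(\mathbb R/\mathbb Z)^2$''. It does not: the $2$-torus carrying $T$ is a \emph{section} of the Heisenberg nilmanifold, not its abelianisation, and in the $(y,z)$-coordinates the renormalisation map is genuinely quadratic. The paper computes it explicitly (Proposition~\ref{nil:th:nil}) as
\[
\Phi(y,z)=\bigl(\phi^2 y,\; a y^2+by+z\bigr),\qquad a=-\phi^3,
\]
with $b$ determined by the parameters; the $ay^2$ term is precisely the trace of the Heisenberg nonlinearity on the section, and without it the conjugacy of the $z$-cocycles fails. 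Your sketch also takes the induction interval of length $1/\phi$ with ``Fibonacci return times''. At that scale the rescaled base rotation is $1/\phi$, i.e.\ $-\alpha$ rather than $\alpha$, so an extra reflection would be needed; the paper instead induces on an interval of length $1/\phi^2$ (return times $2$ and $3$) and rescales by $\phi^2$, which returns the same rotation directly. Finally, you explicitly flag the matching of the $z$-increments as ``the main obstacle'' and do not carry it out; in the paper this is exactly the content of the explicit computation in Proposition~\ref{nil:th:nil}, which fixes $a=-\phi^3$ by solving a linear system and then reads off the induced angle $\theta'=\phi^2\theta+\phi^2(s+1)-(s'+1)$. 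Your conceptual route via $\mathfrak S_\sigma$ is in fact how the paper proves the general Theorem~\ref{th7} in Section~\ref{nil:se:flot}, but for Proposition~\ref{nil:th:rautoin} it proceeds by this direct, coordinate-level verification.
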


\noindent
The Heisenberg group has in its automorphisms group, some semi-simple and hyperbolic elements, that stabilize discrete Heisenberg group $\Gamma$, and preserve the center. 
Let G, be the group of unimodular automorphisms of the Heisenberg space. The space $G/$ stab$_G (\Gamma)$ is then a natural lattice space. 
The set of one-parameter flows is identified with the Lie algebra of the group, and we can then consider it, as a flat bundle over the moduli space. 
\\
The flow generated by a one-parameter group of semi-simple hyperbolic elements on $G$, induced on this bundle, a flow called \textbf{the renormalization flow}. 
L. Flaminio and G. Forni study  this flow in \cite{MR2218767}.
They deduce results on the distribution of flows in Heisenberg space by considering a co-homological equation. 
\\
We are interested by the periodic orbits of the renormalization flow and give an explicit calculation of the renormalized flow.
In Proposition \ref{nil:prop:autosubs}, we show that the periodic points of the renormalization flow
arise naturally from automorphisms acting on $\mathbb F_2$.
\\
We construct sections of flows adapted to these automorphisms.
We will see that  the first return of these flows into these sections, have remarkable properties.
The existence of such sections is not obvious, and we are currently unable to generalize this constructions to higher dimensions.
These applications  are conjugate to niltranslations. We obtain the following result:

\begin{theoreme}
\label{th7}
Let $ M = \begin{pmatrix} A & B \\ C & D \end{pmatrix}$
be a matrix with integer coefficients such that   $\mid det(M) \mid =1$.
\\
Assume that this matrix admits an eigenvalue  $\lambda$ with modulus $\mid \lambda \mid >1$.
We denote by $(\alpha,\beta)$ the eigenvector associated to $\lambda$ such that $\alpha+\beta=1$.
For every pair of integers $(n,m)$,
let:
\begin{equation}
\label{nil:eq:gamma}
\gamma =   \frac{ \alpha}{\lambda - \det(M)}  \left(  n - \frac{AC}{2} \right) +  \frac{ \beta }{\lambda - \det(M)}  \left( m - \frac{BD}{2} \right).
\end{equation}
Then, with the notations which we will introduce, the niltranslation by the element
$\begin{bmatrix} \alpha \\ \beta \\ \gamma + \frac{\alpha \beta}{2}  \end{bmatrix} $
on the nilmanifold
$$
\left\{   \begin{bmatrix} x \\ -x+n \\ z \end{bmatrix} ;  \begin{array}{ll} (x,z)\in \mathbb R^2 \\ n \in   \mathbb Z \end{array} \right\}
\mbox{ \Big / }
\left\{  \begin{bmatrix} n \\ m \\ p \end{bmatrix} ; (n,m,p)\in \mathbb Z^3 \right\}
\mbox{ is self-induced, minimal and uniquely ergodic. }
$$
\end{theoreme}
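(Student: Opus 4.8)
The plan is to treat the three assertions separately, disposing of minimality and unique ergodicity by a reduction to the base circle, and then concentrating the real work on self-induction, which is where the specific value \eqref{nil:eq:gamma} of $\gamma$ must be used.

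First I would identify the structure of the nilmanifold. Quotienting out the center (the $z$-axis, which is the commutator subgroup) collapses the space onto its maximal torus factor, and a direct computation shows that $\{(x,-x+n): x\in\mathbb R,\ n\in\mathbb Z\}/\mathbb Z^2$ is a single circle $\mathbb R/\mathbb Z$ parametrised by $x\bmod 1$, on which the niltranslation projects to the rotation $x\mapsto x+\alpha$. Thus the whole system is an affine skew product over this rotation, the fibre cocycle being of winding number one (coming from the term $z\mapsto z+y$ in the group law). Consequently, by Furstenberg's criterion for skew products on the torus --- equivalently, by the Auslander--Green--Hahn dichotomy for nilsystems, under which minimality, ergodicity and unique ergodicity coincide and are governed by the factor rotation --- it suffices to prove that $\alpha$ is irrational. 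Combining the eigenvector relation with $\alpha+\beta=1$ gives $\alpha=\dfrac{B}{\lambda+B-A}$, and since $|\lambda|>1$ while $|\det(M)|=1$ forces the companion eigenvalue to have modulus $<1$, the integral characteristic polynomial has no integer root, so $\lambda$ is a quadratic irrational and hence $\alpha$ is irrational (the degenerate triangular cases being treated separately). This settles minimality and unique ergodicity.

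For self-induction I would use the renormalising automorphism supplied by Proposition \ref{nil:prop:autosubs}: the matrix $M$ factorises to an automorphism $\Phi=\mathfrak S_\sigma$ of the Heisenberg group that stabilises the lattice and preserves the center, acting on the abelianisation by $M$ and on the center by $\det(M)$. In particular $\Phi$ scales the eigendirection $(\alpha,\beta)$ by $\lambda$, so it descends to a diffeomorphism $\bar\Phi$ of the nilmanifold which renormalises the base rotation exactly as the classical Rauzy induction renormalises the golden (quadratic) rotation. The strategy is then to choose the section $\Sigma$ to be the sub-nilmanifold lying over the subinterval $I\subset\mathbb R/\mathbb Z$ whose induced rotation, rescaled by $\lambda$, reproduces $x\mapsto x+\alpha$; to show that the first-return map $T_\Sigma$ of the niltranslation $T_g$ to $\Sigma$ is again a niltranslation, by some return element $g_\Sigma$; and to verify the conjugacy $\bar\Phi\circ T_\Sigma=T_g\circ\bar\Phi$, i.e. $\Phi(g_\Sigma)=g$. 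The base components match by the self-similarity of the circle rotation, while the center component yields one linear equation for $\gamma$.

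The hard part will be the center bookkeeping. The return element $g_\Sigma$ is a product $g^{k}$ of iterates (the return time $k$ being the Fibonacci-type integer dictated by the induction on $I$) corrected by a lattice element, and the Heisenberg power $g^k$ produces a center coordinate equal to $k$ times $\gamma$ plus a quadratic term in $(\alpha,\beta)$; the lattice reduction bringing the orbit back into $\Sigma$ contributes the integer shifts $n,m$ and the constants $-\tfrac{AC}{2},-\tfrac{BD}{2}$ through the action of $\Phi$ on the center. Imposing $\Phi(g_\Sigma)=g$ on the center coordinate, where $\Phi$ multiplies by $\det(M)$ while the base is stretched by $\lambda$, becomes an affine fixed-point equation of the shape $(\lambda-\det(M))\,\gamma=\alpha\!\left(n-\tfrac{AC}{2}\right)+\beta\!\left(m-\tfrac{BD}{2}\right)$, whose unique solution is precisely \eqref{nil:eq:gamma}; the summand $\tfrac{\alpha\beta}{2}$ in the translating element is exactly the Weyl-symmetrisation constant that makes this computation close up. The main obstacle is thus not any single estimate but the explicit construction of $\Sigma$ together with the verification that the first return is realised by $\Phi^{-1}(g)$ on the nose; once that identity holds, self-induction is immediate and, combined with the minimality and unique ergodicity established above, completes the proof.
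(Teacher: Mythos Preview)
Your treatment of minimality and unique ergodicity is sound and essentially the paper's: both reduce to the irrationality of $\alpha$ (equivalently of $\alpha/\beta$), the paper invoking Auslander--Green--Hahn for the ambient nilflow together with Lesigne's theorem, you invoking Furstenberg's skew-product criterion directly on the section.

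The self-induction argument, however, has a structural gap. You take the conjugating map to be $\bar\Phi=\mathfrak S_\sigma$ and aim to verify $\bar\Phi\circ T_\Sigma=T_g\circ\bar\Phi$ via the group identity $\Phi(g_\Sigma)=g$. But $\mathfrak S_\sigma$ acts on the abelianisation by $M$, and $M$ does \emph{not} preserve the anti-diagonal $\{x+y\in\mathbb Z\}$ on which the niltranslation lives: it sends this set to $\{(A+C)x+(B+D)y\in\mathbb Z\}$. Hence $\bar\Phi$ does not carry a section $\Sigma\subset\underline{\mathcal D}$ back into $\underline{\mathcal D}$, and the equation $\Phi(g_\Sigma)=g$ cannot be read as a conjugacy of maps on $\underline{\mathcal D}$. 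The paper itself stresses (Section~\ref{se:fin}) that the renormalisation map one actually obtains on $\underline{\mathcal D}$ is \emph{not} a Heisenberg homomorphism, so your Ansatz cannot close up as written.

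The paper's fix is to interpose an auxiliary section. It places $\Sigma$ along the \emph{contracting} eigendirection $(\alpha',\beta')$ of $M$, not over a subinterval of the base circle of $\underline{\mathcal D}$. Because $\mathfrak S_\sigma$ scales that direction by $\lambda'$ and conjugates the flow $\Phi_\lambda$ to its time-$\lambda$ reparametrisation (this is precisely Proposition~\ref{nil:prop:flotcomm}, and is where the value \eqref{nil:eq:gamma} of $\gamma$ enters), the first-return map $T_\Sigma$ is self-induced with conjugacy $\mathfrak S_\sigma$ (Proposition~\ref{nil:prop:autoind}). Separately, the anti-diagonal $\underline{\mathcal D}$ is shown to be a section of the \emph{same} flow with constant return time $1$, whose first-return map is the niltranslation $T_g$; a geometric estimate then shows that flowing along $\Phi_\lambda$ gives a measurable conjugacy $\psi$ between $T_\Sigma$ and $T_g$ (Proposition~\ref{nil:th:niltran}). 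Self-induction therefore transfers from $T_\Sigma$ to $T_g$, but the resulting conjugacy is the composite $\psi\circ\mathfrak S_\sigma\circ\psi^{-1}$, not a Heisenberg automorphism. Your center bookkeeping for $\gamma$ is morally the right computation, but it belongs to the flow-level identity of Proposition~\ref{nil:prop:flotcomm}, not to a purported identity $\Phi(g_\Sigma)=g$ on $\underline{\mathcal D}$.
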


\noindent
In particular, this theorem states that 
each niltranslation which is the first return  map (with constant return time $1$)  of a nilflow periodic under renormalization, is self-induced.
Then, a naturel question arise : \textit{Do the self-induced niltranslation come from a periodic nilflow under renormalization ?}
We will see that the answer to this question is no and we start Section \ref{se:fin} by exhibiting a counterexample.
We also raise another difficulty.
We will see that it is possible that the niltranslations can be self-induced in areas that do not project well on the abelianisation.

\section{The Heisenberg group}
\label{intro:se:heis}

We recall here some properties of
the \textbf{Heisenberg group} $ \mathbb H _3 (\mathbb R) $, denoted $ \boldsymbol {X} $,
of real upper triangular 3$\times$3 matrices,
with ``1s'' on the diagonal.
The group law is given by:
$$
\boldsymbol{x } \bullet \boldsymbol{x' }=
\begin{bmatrix} x +x' \\ y +y' \\ z + z' + x y'  \end{bmatrix}
\mbox{ where }
\boldsymbol{x} = \begin{pmatrix} 1 & x& z \\ 0 & 1 & y \\ 0 & 0 & 1 \end{pmatrix}
=
\begin{bmatrix} x  \\ y \\ z  \end{bmatrix} .
\mbox{ We get }
\boldsymbol{x } ^{-1} = \begin{bmatrix} -x \\ -y  \\  xy - z  \end{bmatrix}.
$$
The identity element of the group is the identity matrix, denoted by
$\boldsymbol{1}$. 
The \textbf{commutator} of elements
$\boldsymbol{x }$ and $\boldsymbol{y }$ is:
$
[ \boldsymbol{x },\boldsymbol{y }] =  \boldsymbol{x } \bullet \boldsymbol{y }
\bullet \boldsymbol{x }^{-1} \bullet \boldsymbol{y }^{-1}.
$
The \textbf{center} of the group is:
$$
 \boldsymbol{Z}
 = \{ \boldsymbol{x } \in \boldsymbol{X}; 
 \mbox{ $ \forall \boldsymbol{y} \in \boldsymbol{X}$ , }
 [ \boldsymbol{x },\boldsymbol{y }] = \boldsymbol{1 }\}
  = \left\{  \begin{bmatrix} 0  \\ 0 \\ z  \end{bmatrix} 
   \in \boldsymbol{X}; z \in \mathbb R
   \right\}.
$$
We denote by  $\boldsymbol{p } :\boldsymbol{X} \rightarrow \mathbb R^2$,
the group homomorphism defined by: 
$\boldsymbol{p } ( \boldsymbol{x } ) = (x,y)$.
The following sequence is exact:
$$
 \xymatrix{
    1 \ar@{>}[r] &\boldsymbol{Z} \ar@{>}[r]^{i} &\boldsymbol{X} \ar@{>}[r]^{\boldsymbol{p}}   &\mathbb R^2 \ar@{>}[r]     &1
  }.
$$
We endow the space $\boldsymbol{X}$, with a metric $\boldsymbol{d}$,
which is invariant
by left multiplication.
(i.e. $\forall (\boldsymbol{x},\boldsymbol{y},\boldsymbol{z}) \in \boldsymbol{X}^3$: 
$\boldsymbol{d} ( \boldsymbol{x},\boldsymbol{y})=
\boldsymbol{d} (\boldsymbol{z} \bullet \boldsymbol{x},\boldsymbol{z} \bullet \boldsymbol{y})$).
It will be defined from the \textbf{group norm}:
$$
\left | \left |  \cdot \right | \right | _{\boldsymbol{X}}: \boldsymbol{X} \longrightarrow \mathbb R^+
\mbox{ defined by  }
\left | \left |  \begin{bmatrix} x  \\ y \\ z  \end{bmatrix}  \right | \right | _{\boldsymbol{X}}
= \left( \left( x^2+y^2 \right)^2 + \left( z - \frac{xy}{2} \right)^2 \right)^{\frac{1}{4}}.
$$
The application $\left | \left |  \cdot \right | \right | _{\boldsymbol{X}}$
is a group norm because it verifies the following three properties:
$\left | \left |  \boldsymbol{x}  \right | \right | _{\boldsymbol{X}} = 0$
if and only if $\boldsymbol{x}=\boldsymbol{1}$,
$\left | \left |  \boldsymbol{x}  \right | \right | _{\boldsymbol{X}} =\left | \left |  \boldsymbol{x}^{-1}  \right | \right | _{\boldsymbol{X}}$
and
$ \left | \left |  \boldsymbol{x} \bullet \boldsymbol{y} \right | \right | _{\boldsymbol{X}} \leq 
\left | \left |  \boldsymbol{x}  \right | \right | _{\boldsymbol{X}} +
\left | \left |   \boldsymbol{y}  \right | \right | _{\boldsymbol{X}}$
for all  $(\boldsymbol{x}, \boldsymbol{y}) \in \boldsymbol{X}^2$.
The metric $\boldsymbol{d}$ is defined for all $(\boldsymbol{x}, \boldsymbol{y}) \in \boldsymbol{X}^2$ by:
$
\boldsymbol{d} (\boldsymbol{x}, \boldsymbol{y})  = 
\left | \left |   \boldsymbol{x}^{-1} \bullet \boldsymbol{y}\right | \right | _{\boldsymbol{X}} .
$
\begin{figure}[H]
\centering
\includegraphics[width=4.5cm]{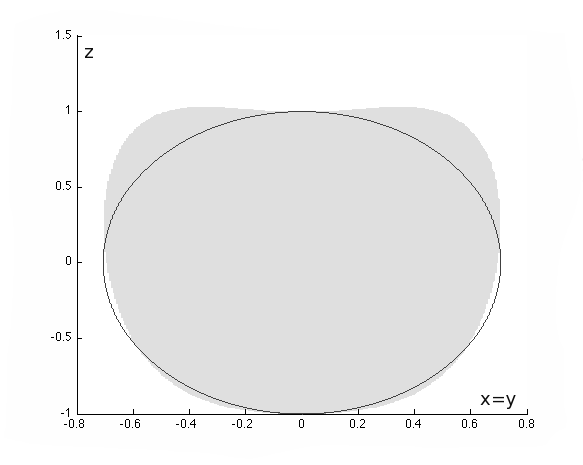}
\includegraphics[width=4.5cm]{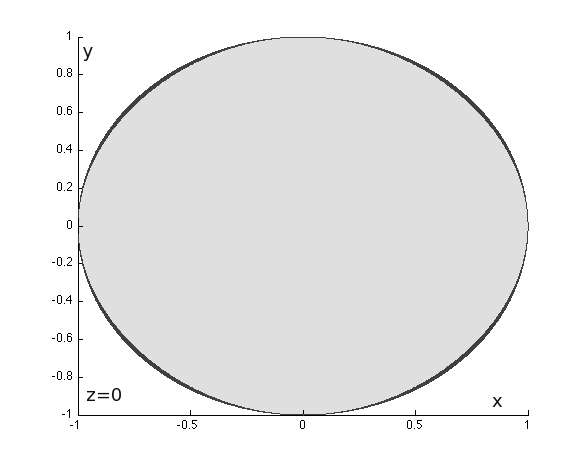}
\includegraphics[width=4.5cm]{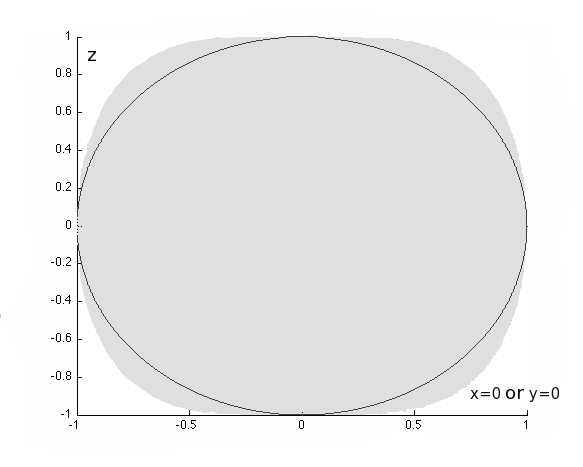}
\caption{We represent the unit ball of
$(\boldsymbol{X},\boldsymbol{d})$ and the unit sphere
of the standard Euclidean space
$\mathbb R^3$
in subspaces of $\boldsymbol{X}$
consisting of matrices $[x,y,z] $
satisfaying  respectively:
$x=y$, $z=0$, and $x=0$.}
\label{nil:fig:sphere}
\end{figure}
\noindent
Although the metric $\boldsymbol{d}$ 
and the standard Euclidean metric are different,
they induce the same topology on $\mathbb R^3$.
\vspace{6pt}
\\
Since
$\boldsymbol{Z} = [\boldsymbol{X},\boldsymbol{X}]$, the space $( \boldsymbol{X},\bullet,\boldsymbol{d})$
is a \textbf{nilpotent Lie group} of rank $2$.
It can be endowed with a differentiable structure. The tangent space to
$\boldsymbol{X}$ in $\boldsymbol{1}$, which is by definition its
\textbf{Lie Algebra}, is:
$$
\boldsymbol{ \mathfrak g}=
 \left\{\boldsymbol{ \mathfrak x} = 
\begin{pmatrix} 0 & \alpha & \gamma \\ 0 & 0 & \beta \\ 0 & 0 & 0 \end{pmatrix}
 ; (\alpha, \beta, \gamma) \in \mathbb R ^3
   \right\}.
   \mbox{ The elements of $\boldsymbol{ \mathfrak g}$ will be denoted }
   \boldsymbol{ \mathfrak x} = 
\begin{pmatrix} \alpha \\  \beta \\  \gamma  \end{pmatrix}
.
$$
Since the space $(\boldsymbol{X},\boldsymbol{d})$ is
connected, the exponential is a diffeomorphism from this space to
Lie algebra $\boldsymbol{ \mathfrak g}$.
$$
\exp :
\begin{array}{cccc}
 \boldsymbol{ \mathfrak g} &  \longrightarrow & \boldsymbol{ X} \\
\mathfrak{x} = \begin{pmatrix} \alpha \\  \beta \\  \gamma  \end{pmatrix} & \mapsto &
\boldsymbol{1} + \mathfrak{x} + \frac{1}{2}\mathfrak{x}^2
=
\begin{bmatrix} \alpha \\  \beta \\  \gamma + \frac{\alpha \beta}{2}  \end{bmatrix} 
\end{array}
\mbox{ and }
\log :
\begin{array}{cccc}
 \boldsymbol{ X}   &  \longrightarrow & \boldsymbol{ \mathfrak g}\\
\begin{bmatrix} \alpha \\  \beta \\  \gamma  \end{bmatrix} & \mapsto &
\begin{pmatrix} \alpha \\  \beta \\  \gamma  - \frac{\alpha \beta}{2}\end{pmatrix}
\end{array}.
$$
The \textbf{Lie bracket} in the Lie algebra is defined by:
 $
 \left [ \boldsymbol{ \mathfrak x} , \boldsymbol{ \mathfrak x} '\right]
=
\frac{1}{2}
\log \left( [ \exp \boldsymbol{ \mathfrak x} , \exp \boldsymbol{ \mathfrak x}' ]\right).$
\vspace{6pt}
\\
With these notations,
$
\exp ( \boldsymbol{ \mathfrak x} + \boldsymbol{ \mathfrak x'} )
\bullet \exp ( [\boldsymbol{ \mathfrak x},\boldsymbol{ \mathfrak x} '])
=
\exp  \boldsymbol{ \mathfrak x} \bullet \exp \boldsymbol{ \mathfrak x'} 
\mbox{ and }
\log (\boldsymbol{ x} \bullet  \boldsymbol{ x}')=
\log \boldsymbol{ x}  +  \log \boldsymbol{ x}'
+ \log ([ \boldsymbol{ x}  ,  \boldsymbol{ x}']) . 
$
\vspace{6pt}
\\
For any element
$\boldsymbol{ \mathfrak x} = \begin{pmatrix} \alpha \\  \beta \\  \gamma  \end{pmatrix} $
of the Lie algebra $\boldsymbol{ \mathfrak g}$, we denote by:
\begin{equation}
\label{nil:eq:ssgroupe}
\boldsymbol{G}_{\boldsymbol{ \mathfrak x}} = 
\left\{  \exp(t \cdot \boldsymbol{ \mathfrak x} ) \mbox{ ; } t \in \mathbb R 
\right\}
=
\left\{  \begin{bmatrix} \alpha t  \\ \beta t \\ \gamma t + \frac{\alpha \beta}{2} t^2 \end{bmatrix}
 \mbox{ ; } t \in \mathbb R 
\right\}
\mbox{  with the convention }
t \cdot \boldsymbol{ \mathfrak x}
=
\begin{pmatrix} \alpha t\\  \beta t \\  \gamma t \end{pmatrix} .
\end{equation}
These are the $\boldsymbol{1}$ \textbf{parameter sub-groups} of $\boldsymbol{X}$.
\vspace{6pt}
\\
Let $\mathbb H_3(\mathbb Z)=\boldsymbol{\Gamma}$ be the sub-group of $\boldsymbol{X}$
consisting of matrices with integer coefficients.
The following sequence is exact:
$$
 \xymatrix{
    1 \ar@{>}[r]
    &\boldsymbol{Z}\cap \boldsymbol{\Gamma} \ar@{>}[r]^{i} 
    & \boldsymbol{\Gamma} \ar@{>}[r]^{\boldsymbol{p}}
     &\mathbb Z^2 \ar@{>}[r]
     &1
  }.
$$
The metric $\boldsymbol{d}$
induces a metric   $\underline{ \boldsymbol{d}}$
on the quotient space $\boldsymbol{X} \setminus \boldsymbol{\Gamma}$
denoted $\underline{\boldsymbol{X}}$.
The space $\boldsymbol{X}$ acts isometrically by left translation on
$\underline{\boldsymbol{X}}$. There is
a unique probability measure invariant by this action, called the \textbf{Haar measure}.
By definition, $(\underline{\boldsymbol{X}}, \underline{ \boldsymbol{d}})$
is a \textbf{nilmanifold}. 
The space  $\underline{\boldsymbol{X}}$
is topologically isomorphic to the space $[0;1]^3$, with the following identifications:
$$
\begin{bmatrix} 0  \\ y \\ z  \end{bmatrix} \sim \begin{bmatrix} 1  \\ y \\ z  \end{bmatrix}
\mbox{, } \begin{bmatrix} x  \\ y \\ 0 \end{bmatrix} \sim \begin{bmatrix} x  \\ y \\ 1  \end{bmatrix}
\mbox{, } \begin{bmatrix} x  \\ 0 \\ z  \end{bmatrix} \sim \begin{bmatrix} x  \\ 1 \\ x+z \mbox{ mod }1  \end{bmatrix}
\mbox{ and } \begin{bmatrix} x  \\ 1 \\ z  \end{bmatrix} \sim \begin{bmatrix} x  \\ 0 \\ z-x \mbox{ mod }1  \end{bmatrix}.
$$
\begin{figure}[!h]
\centering
\includegraphics[width=4cm]{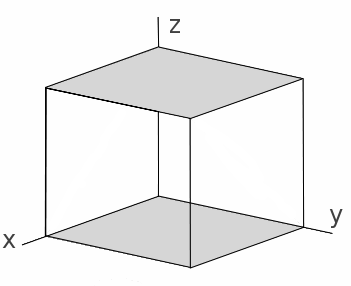}
\includegraphics[width=4cm]{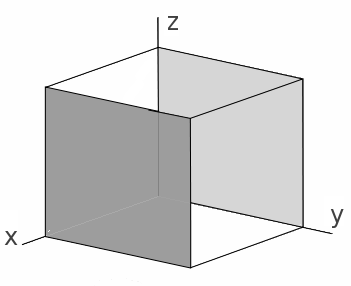}
\includegraphics[width=4cm]{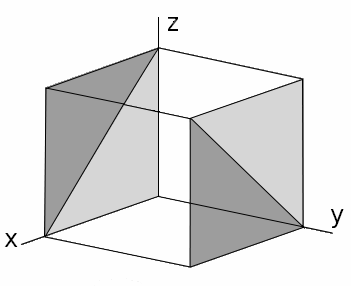}
\caption{Identification of the faces of the standard unit cube
to get the nilmanifold $\underline{\boldsymbol{X}}$.}
\label{nil:fig:identif}
\end{figure}
\\
The Haar measure of the space $\underline{\boldsymbol{X}}$, immersed in this fundamental domain, is the standard Lebesgue measure, denoted $\lambda^3$. 
There are three types of dynamical systems acting on spaces $\boldsymbol{X}$ and $\underline{\boldsymbol{X}}$ natural to consider and preserving the measure.
\vspace{6pt}
\\
The first family of applications is composed of continuous homomorphisms of  $\boldsymbol{X}$.
Since the work of G. Gelbrich in \cite{MR1326950},  we know that they are of the following form:
\begin{equation}
\label{nil:eq:hom}
\mathcal L :  \begin{bmatrix} x  \\ y \\ z  \end{bmatrix} \mapsto \begin{bmatrix} ax+by  \\ cx+dy \\  \frac{ac}{2} x^2+ (e- \frac{ac}{2})x + \frac{bd}{2} y^2+ (f- \frac{bd}{2})y + bc xy + (ad-bc)z \end{bmatrix}.
\end{equation}
$\mathcal L$ preserve the lattice $\boldsymbol{\Gamma}$ when
the coefficients $(a,b,c,d,e,f)$ are integers. In this case, the application $\mathcal L$ also acts on the quotient space $\underline{\boldsymbol{X}}$.
In addition, these applications preserve the Haar measure, if the coefficients satisfy the equation:  $\mid ad-bc \mid =1$.
\vspace{6pt}
\\
We are also interested in the action of  $1$ parameter subgroups on the space $\boldsymbol{X}$, given by equation (\ref{nil:eq:ssgroupe}):
\begin{equation} \label{nil:eq:flot}
\Phi ^t  _{\mathfrak x}:  \boldsymbol{x} \mapsto \boldsymbol{g} _{ \boldsymbol{\mathfrak x}} ^t  \bullet \boldsymbol{x} 
\mbox{, where } \boldsymbol{g} _{\boldsymbol{\mathfrak x}} ^t 
= \exp ( t \cdot \boldsymbol{\mathfrak x})= \begin{bmatrix} \alpha t  \\ \beta t \\ \gamma t + \frac{\alpha \beta}{2} t^2 \end{bmatrix}
\mbox{ with $t \in \mathbb R$ and $\boldsymbol{\mathfrak x} = \begin{pmatrix} \alpha \\  \beta \\  \gamma  \end{pmatrix} \in \boldsymbol{\mathfrak g}$},
\end{equation}
and their discrete time analogue, the action by left translations:
\begin{equation}
\label{nil:eq:trans}
T_{\boldsymbol{\mathfrak x}}  : 
\begin{array}{cccc} \boldsymbol{X} & \rightarrow & \boldsymbol{X} \\ \boldsymbol{x } & \mapsto &  \boldsymbol{g } _{\boldsymbol{\mathfrak x}}  \bullet \boldsymbol{x}  \end{array}
\mbox{ with $\boldsymbol{\mathfrak x} = \begin{pmatrix} \alpha \\  \beta \\  \gamma  \end{pmatrix} \in \boldsymbol{\mathfrak g}$ and }
\boldsymbol{g } _{\boldsymbol{\mathfrak x}}  = \exp({\boldsymbol{\mathfrak x}} ) = \begin{bmatrix} \alpha  \\ \beta \\ \gamma + \frac{\alpha \beta}{2} \end{bmatrix} .
\end{equation}
We will also denote these maps $\Phi  _{\alpha,\beta,\gamma}$ and $T_{\alpha,\beta,\gamma} $.
These applications act naturally on the quotient space  $\underline{\boldsymbol{X}}$, denoted them by $\underline{T}$ and $\underline{\Phi }$.
These classes of systems, called \textbf{niltranslations} and \textbf{nilflows}, have been widely studied. Let us quote here two central results.
For $\underline{\boldsymbol{x}} \in \underline{\boldsymbol{X}}$, we put $\mathcal O (\underline{\boldsymbol{x}} ) =  \overline{\{ \underline{T} ^n ( \underline{\boldsymbol{x}} );n \in \mathbb N\}}$.

\begin{theor}[ E. Lesigne \cite{MR1116647}] \label{th:emmanuelnil}
The system $(\mathcal O (\underline{\boldsymbol{x}} ), \underline{T})$ is minimal and uniquely ergodic.
\end{theor}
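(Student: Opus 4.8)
The plan is to realize the stated niltranslation as the first-return map of a nilflow on $\underline{\boldsymbol X}$ that is periodic under renormalization, the renormalizing automorphism being the one attached to $M$ by Proposition \ref{nil:prop:autosubs}. Self-induction will then be a formal consequence of the renormalization relation, while minimality and unique ergodicity will be read off from the maximal torus factor. First I would fix the automorphism: plugging $(a,b,c,d)=(A,B,C,D)$ and the free integer parameters $(e,f)=(n,m)$ into the normal form (\ref{nil:eq:hom}) produces a continuous homomorphism $\mathcal L$ of $\boldsymbol X$ which, since $\mid\det(M)\mid=1$, preserves both the lattice $\boldsymbol\Gamma$ and the Haar measure and hence descends to $\underline{\boldsymbol X}$. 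A direct computation of its differential at the identity, through $\log\circ\mathcal L\circ\exp$, gives the Lie-algebra automorphism
\[
d\mathcal L \begin{pmatrix}\alpha\\\beta\\\gamma\end{pmatrix}=\begin{pmatrix} A\alpha+B\beta \\ C\alpha+D\beta \\ (n-\tfrac{AC}{2})\alpha+(m-\tfrac{BD}{2})\beta+\det(M)\,\gamma\end{pmatrix}.
\]
The crucial observation is that the value of $\gamma$ prescribed by (\ref{nil:eq:gamma}) is exactly the solution of the third-coordinate equation $(n-\tfrac{AC}{2})\alpha+(m-\tfrac{BD}{2})\beta+\det(M)\gamma=\lambda\gamma$; combined with the fact that $(\alpha,\beta)$ is a $\lambda$-eigenvector of $M$, this says precisely that $\boldsymbol{\mathfrak x}=(\alpha,\beta,\gamma)$ is an eigenvector of $d\mathcal L$ with eigenvalue $\lambda$.

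Since $\mathcal L$ is a homomorphism and $\exp$ is natural, $\mathcal L(\exp(t\cdot\boldsymbol{\mathfrak x}))=\exp(t\cdot d\mathcal L\,\boldsymbol{\mathfrak x})=\exp(\lambda t\cdot\boldsymbol{\mathfrak x})$, so with the notation of (\ref{nil:eq:flot}) I obtain the renormalization relation $\mathcal L\circ\Phi^t_{\boldsymbol{\mathfrak x}}=\Phi^{\lambda t}_{\boldsymbol{\mathfrak x}}\circ\mathcal L$ on $\boldsymbol X$, which passes to $\underline{\boldsymbol X}$; thus $\mathcal L$ is a periodic point of the renormalization flow and conjugates the nilflow to its own time-$\lambda$ reparametrization. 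Next I would check that $S=\{[x,y,z]:x+y\in\mathbb Z\}$ is a closed subgroup of $\boldsymbol X$ containing $\boldsymbol\Gamma$, so that $\underline S=S/\boldsymbol\Gamma$ is the compact nilmanifold of the statement, and that, because $\alpha+\beta=1$, the flow $\Phi^t_{\boldsymbol{\mathfrak x}}$ advances $x+y$ at unit speed and therefore admits $\underline S$ as a global cross-section with constant return time $1$, whose first-return map is exactly the niltranslation $T_{\boldsymbol{\mathfrak x}}$ of (\ref{nil:eq:trans}).

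Self-induction then comes from comparing the two global cross-sections $\underline S$ and $\mathcal L\,\underline S$ of the same nilflow. Using the renormalization relation one checks that the return time to $\mathcal L\,\underline S$ is the constant $\lambda$, and that $\mathcal L$ conjugates $(\underline S,T_{\boldsymbol{\mathfrak x}})$ to the first-return system on $\mathcal L\,\underline S$. Following each point of $\mathcal L\,\underline S$ by the flow until its return to $\underline S$ defines a map onto a subset $Y\subset\underline S$; this map is injective because the return time $\lambda>1$ to $\mathcal L\,\underline S$ forces every orbit to meet $\mathcal L\,\underline S$ at most once inside a unit-length window, and it intertwines the first-return system on $\mathcal L\,\underline S$ with the map induced by $T_{\boldsymbol{\mathfrak x}}$ on $Y$. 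Composing with $\mathcal L$ yields an explicit conjugacy of $(\underline S,T_{\boldsymbol{\mathfrak x}})$ with its own first-return map on $Y\subset\underline S$, which is exactly self-induction, $\mathcal L$ playing the role of the rescaling by $\lambda$. I expect the main obstacle to lie here: verifying rigorously that $\mathcal L\,\underline S$ is a genuine global cross-section and that the resulting induced map on $Y$ is again (conjugate to) the original niltranslation rather than merely a measurable rearrangement; this is where $\mid\det(M)\mid=1$ (so that $\mathcal L$ is an automorphism, not just an endomorphism, of $\boldsymbol\Gamma$), the integrality of $(n,m)$, and the normalization $\alpha+\beta=1$ enter in an essential way, forcing the return-time partition to be finite and compatible with $\boldsymbol\Gamma$.

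Finally, minimality and unique ergodicity I would deduce from the general theory of niltranslations rather than from the self-induction. Since the commutator subgroup $[S,S]$ is the whole center $\boldsymbol Z$, the maximal torus factor of $(\underline S,T_{\boldsymbol{\mathfrak x}})$ is the circle $\mathbb R/\mathbb Z$ carrying the rotation $x\mapsto x+\alpha$. Because $M$ has integer entries, $\mid\det(M)\mid=1$ and $\mid\lambda\mid>1$, the eigenvalue $\lambda$ cannot be rational, hence $\alpha$ is irrational and this rotation is minimal. The standard dichotomy for nilsystems, namely that minimality, ergodicity and unique ergodicity are equivalent and hold as soon as the induced rotation on the maximal torus factor is minimal, which is the mechanism underlying Theorem \ref{th:emmanuelnil}, then gives minimality and unique ergodicity of $(\underline S,T_{\boldsymbol{\mathfrak x}})$ and completes the proof.
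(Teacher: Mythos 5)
There is a fundamental mismatch here: the statement you were asked to prove is Theorem \ref{th:emmanuelnil}, E. Lesigne's background theorem, which the paper does not prove at all but quotes with a citation. That theorem is completely general: for \emph{any} niltranslation $\underline{T}=\underline{T}_{\boldsymbol{\mathfrak x}}$ on the Heisenberg nilmanifold and \emph{any} point $\underline{\boldsymbol{x}}$, the restriction of $\underline{T}$ to the orbit closure $\mathcal O(\underline{\boldsymbol{x}})$ is minimal and uniquely ergodic --- no hyperbolic matrix $M$, no eigenvalue $\lambda$, no normalization $\alpha+\beta=1$ appears in its hypotheses. What you have written is instead a sketch of the paper's main Theorem \ref{th7} (the self-induced niltranslations attached to unimodular hyperbolic automorphisms via the renormalization relation $\mathcal L\circ\Phi^t_{\boldsymbol{\mathfrak x}}=\Phi^{\lambda t}_{\boldsymbol{\mathfrak x}}\circ\mathcal L$ and the cross-section $\{x+y\in\mathbb Z\}$), i.e.\ essentially the route the paper itself takes in Sections \ref{nil:se:sym} and \ref{nil:se:flot}. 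That machinery is simply unavailable for the statement actually posed: a generic $\boldsymbol{\mathfrak x}$ admits no renormalizing automorphism, and when the rotation number $\alpha$ is rational the translation is not minimal on $\underline{\boldsymbol X}$, yet Lesigne's theorem still asserts minimality and unique ergodicity on the (then proper) orbit closure --- a case your torus-factor dichotomy cannot reach.

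Worse, your final paragraph is circular as a proof of the stated theorem: you close the argument by appealing to ``the standard dichotomy for nilsystems \dots\ which is the mechanism underlying Theorem \ref{th:emmanuelnil}'', i.e.\ you invoke the very result to be established. A genuine proof would have to work directly with an arbitrary orbit closure: identify $\mathcal O(\underline{\boldsymbol{x}})$ as a subnilmanifold (or a finite tower over one), and establish unique ergodicity there, classically by viewing the niltranslation as a skew product over a circle rotation and running a Furstenberg-type argument (as the paper itself does, via \cite{MR0133429}, in the concrete situation of Section \ref{nil:se:ex}) together with an induction on the nilpotency class. None of these ingredients --- the description of the orbit closure for rational data, the lifting of unique ergodicity from the base rotation to the skew product --- appears in your proposal, so even granting every renormalization claim, the statement in question remains unproved. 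In short: you proved (modulo the cross-section verifications you yourself flag) a special case belonging to Theorem \ref{th7}, while assuming the general theorem you were asked to prove.
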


\begin{theor}[L. Auslander, L. Green and F. Hahn  \cite{MR0126504},\cite{MR0167569}] \label{nil:th:AGH}
The flow $\left(\underline{\Phi }^t  _{\alpha,\beta,\gamma}\right)_t$ on $\underline{\boldsymbol{X}}$ is minimal if and only if it is uniquely ergodic if and only if the coefficients  $\alpha$ and $\beta$  are linearly independent.
\end{theor}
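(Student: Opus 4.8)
The plan is to prove the triple equivalence as a loop, \emph{independence} $\Rightarrow$ \emph{unique ergodicity} $\Rightarrow$ \emph{minimality} $\Rightarrow$ \emph{independence}, the last implication being handled through its contrapositive; here ``linearly independent'' is understood over $\mathbb Q$. The organizing principle is that the projection $\boldsymbol p$ semiconjugates the nilflow to the linear flow $(x,y)\mapsto(x+\alpha t,\,y+\beta t)$ on the factor torus $\mathbb R^2/\mathbb Z^2$, so the torus records exactly the rational relations between $\alpha$ and $\beta$, while the center carries a genuinely quadratic phase. Concretely, combining (\ref{nil:eq:flot}) with the group law, $\Phi^t_{\alpha,\beta,\gamma}$ sends $\begin{bmatrix} x \\ y \\ z \end{bmatrix}$ to $\begin{bmatrix} x+\alpha t \\ y+\beta t \\ z+\gamma t+\frac{\alpha\beta}{2}t^2+\alpha t\,y \end{bmatrix}$, and the appearance of the term $\frac{\alpha\beta}{2}t^2$ is what will drive the nontrivial direction.

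For necessity I argue the contrapositive. Suppose $\alpha,\beta$ are $\mathbb Q$-dependent, so $p\alpha+q\beta=0$ for some $(p,q)\in\mathbb Z^2\setminus\{(0,0)\}$. The function $\chi\big(\begin{bmatrix} x \\ y \\ z \end{bmatrix}\big)=e^{2\pi i(px+qy)}$ is invariant under right multiplication by $\boldsymbol\Gamma$ (because $p,q$ are integers) and hence descends to a nonconstant continuous function on $\underline{\boldsymbol X}$; along the flow it satisfies $\chi(\Phi^t\boldsymbol x)=e^{2\pi i(p\alpha+q\beta)t}\chi(\boldsymbol x)=\chi(\boldsymbol x)$. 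Its level sets are proper closed invariant subsets, so the flow is not minimal, and the real and imaginary parts give nonconstant invariant $L^2$ functions, so $\lambda^3$ is not ergodic and the flow is not uniquely ergodic. Thus both of the remaining properties force $\mathbb Q$-independence.

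The sufficiency direction, independence $\Rightarrow$ unique ergodicity, is the heart of the matter. I will show that for every $f\in C(\underline{\boldsymbol X})$ the Birkhoff averages $\frac1T\int_0^T f(\Phi^t\boldsymbol x)\,dt$ converge to $\int f\,d\lambda^3$ uniformly in $\boldsymbol x$, which is equivalent to unique ergodicity. Since the central coordinate is genuinely $1$-periodic, I expand $f$ in its fibre Fourier series, writing $f$ as a uniform limit of finite sums of a central character $e^{2\pi i n z}$ (for $n=0$, pulled back from the factor torus; for $n\neq 0$, classical theta sections) times a trigonometric factor in the torus directions. For $n=0$ the relevant terms descend to characters $e^{2\pi i(kx+ly)}$ of the factor torus, and their averages converge by Weyl's criterion precisely because $k\alpha+l\beta\neq0$ for $(k,l)\neq(0,0)$. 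For $n\neq0$, evaluating along the orbit and substituting the coordinate formula above, each reduced term becomes, up to a unit-modulus constant, $\frac1T\int_0^T e^{2\pi i(at^2+b(\boldsymbol x)t)}\,dt$ with leading coefficient $a=\frac{n\alpha\beta}{2}$; independence forces $\alpha\neq0$ and $\beta\neq0$, so $a\neq0$, and the base point and the parameter $\gamma$ enter only through the linear coefficient $b(\boldsymbol x)$ (this is why $\gamma$ does not appear in the criterion).

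The \emph{main obstacle} is exactly this oscillatory estimate, which I would isolate as a lemma: by van der Corput's second-derivative test, $\left|\int_0^T e^{2\pi i(at^2+bt)}\,dt\right|\le C\,|a|^{-1/2}$ with $C$ absolute and, crucially, uniform in $b\in\mathbb R$ and $T>0$, so the average is $O(T^{-1})\to 0=\int f_n\,d\lambda^3$ uniformly in $\boldsymbol x$. Summing the finitely many effective contributions yields uniform convergence of the Birkhoff averages to $\int f\,d\lambda^3$, hence unique ergodicity with invariant measure $\lambda^3$. Finally, unique ergodicity implies minimality: since $\lambda^3$ has full support, a proper closed invariant set $K\subsetneq\underline{\boldsymbol X}$ would, via Krylov--Bogolyubov applied to the compact invariant $K$, carry an invariant probability measure supported on $K$, which differs from $\lambda^3$ because $\underline{\boldsymbol X}\setminus K$ is nonempty open of positive Haar measure, contradicting unique ergodicity. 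Chaining independence $\Rightarrow$ unique ergodicity $\Rightarrow$ minimality $\Rightarrow$ independence closes the equivalence.
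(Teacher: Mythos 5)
The paper itself offers no proof of this statement: it is quoted as a classical theorem with citations to Auslander--Green--Hahn, so your attempt can only be measured against the standard arguments in the literature --- and your outline does follow the classical route (semiconjugacy to the linear flow on the torus factor, fibrewise Fourier decomposition along the central circle, oscillatory estimates on the nonzero central modes). Your two easy implications are correct as written: a rational relation $p\alpha+q\beta=0$ produces the invariant character $e^{2\pi i(px+qy)}$, which indeed descends to $\underline{\boldsymbol X}$ and defeats both minimality and unique ergodicity; and unique ergodicity implies minimality because $\lambda^3$ is invariant with full support while a proper closed invariant set would carry a Krylov--Bogolyubov measure distinct from it.

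There is, however, a genuine gap at the analytic heart. For $n\neq 0$ there exist \emph{no} $\boldsymbol\Gamma$-invariant functions of the form $e^{2\pi inz}e^{2\pi i(kx+ly)}$: invariance under $[0,1,0]$ (which sends $z$ to $z+x$) forces $f=e^{2\pi inz}h(x,y)$ with $h(x,y+1)=e^{-2\pi inx}h(x,y)$, so $h$ is a section of a nontrivial line bundle, of Weil--Brezin form $h(x,y)=\sum_{j\in\mathbb Z}\psi(y+j)e^{2\pi injx}$. Evaluating along the orbit therefore produces not a single quadratic exponential but an infinite sum of terms $\psi(y+\beta t+j)\,e^{2\pi i\left(\frac{n\alpha\beta}{2}t^2+b_j(\boldsymbol x)t\right)}$ with amplitudes that move with $t$; your sentence ``each reduced term becomes, up to a unit-modulus constant, $\frac1T\int_0^T e^{2\pi i(at^2+bt)}\,dt$'' is not literally true, and the claimed uniform $O(T^{-1})$ decay does not survive the summation: applying van der Corput term by term, the sum over $j$ of the amplitude sup-norms grows like $\beta T$ (about $\beta T$ translates $\psi(\cdot+j)$ are non-negligible on the time window), so the per-term bounds $O(|a|^{-1/2}/T)$ only yield $O(1)$. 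The mechanism you isolated --- nonvanishing quadratic coefficient $a=\tfrac{n\alpha\beta}{2}$, uniformity in the linear coefficient, hence independence of $\gamma$ and of the base point --- is the correct one, but to close the argument you must handle the theta sum, e.g.\ by cutting $[0,T]$ into unit blocks and using the first-derivative (non-stationary phase) test away from the single stationary point of the quadratic phase, where the phase derivative is of size comparable to $|n\alpha\beta|\,t$; this gives a uniform bound of order $\frac{\log T}{T}$ for smooth sections, which, combined with fibrewise Fej\'er approximation, does complete the proof. As it stands, the reduction step is the missing idea, not merely a suppressed computation.
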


\noindent
We consider the flow $\Psi ^t$ defined by:
$ \boldsymbol{c} ^t  = \begin{bmatrix} 0 \\ 0 \\ t  \end{bmatrix} \mbox{ and } \Psi ^t (\boldsymbol{x} )  =  \boldsymbol{c} ^t  \bullet \boldsymbol{x} \mbox{ for $t\in \mathbb R$}. $
 \begin{lemme}  \label{nil:lemme:flotcomm}
 $\Phi _{{\alpha',\beta',\gamma'} } ^s \circ \Phi _{{\alpha,\beta,\gamma} } ^t =   \Phi _{{\alpha,\beta,\gamma} } ^t \circ \Phi _{{\alpha',\beta',\gamma'} } ^s \circ \Psi ^{\Delta(t-s)}$
 where $\Delta =  \beta  \alpha ' -  \beta'  \alpha $.  In particular,   $\Psi  ^s \circ \Phi _{{\alpha,\beta,\gamma} } ^t =  \Phi _{{\alpha,\beta,\gamma} } ^t \circ \Psi ^s$. 
 \end{lemme}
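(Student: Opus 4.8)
The plan is to reduce the operator identity to a single identity between elements of $\boldsymbol{X}$. Since each $\Phi^t_{\boldsymbol{\mathfrak x}}$ and each $\Psi^r$ is a left translation, and left translations compose by multiplying the translating elements on the left, the left-hand side is left translation by $\boldsymbol{g}^s_{\boldsymbol{\mathfrak x}'} \bullet \boldsymbol{g}^t_{\boldsymbol{\mathfrak x}}$, while the right-hand side is left translation by $\boldsymbol{g}^t_{\boldsymbol{\mathfrak x}} \bullet \boldsymbol{g}^s_{\boldsymbol{\mathfrak x}'} \bullet \boldsymbol{c}^{\Delta(t-s)}$ (recall that $\Psi^r$ translates by the central element $\boldsymbol{c}^r$, which may be moved freely across the product since it lies in $\boldsymbol{Z}$). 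Evaluating both translations at $\boldsymbol{1}$, it therefore suffices to prove the group identity $\boldsymbol{g}^s_{\boldsymbol{\mathfrak x}'} \bullet \boldsymbol{g}^t_{\boldsymbol{\mathfrak x}} = \boldsymbol{g}^t_{\boldsymbol{\mathfrak x}} \bullet \boldsymbol{g}^s_{\boldsymbol{\mathfrak x}'} \bullet \boldsymbol{c}$ for the appropriate central $\boldsymbol{c}$, and then to read off its exponent.

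First I would observe that the defect $(\boldsymbol{g}^t_{\boldsymbol{\mathfrak x}} \bullet \boldsymbol{g}^s_{\boldsymbol{\mathfrak x}'})^{-1} \bullet \boldsymbol{g}^s_{\boldsymbol{\mathfrak x}'} \bullet \boldsymbol{g}^t_{\boldsymbol{\mathfrak x}}$ is automatically central, so that it lies in the range of $\Psi$ before any computation. Indeed, applying the homomorphism $\boldsymbol{p}$ and using that $\mathbb{R}^2$ is abelian gives $\boldsymbol{p}(\boldsymbol{g}^s_{\boldsymbol{\mathfrak x}'} \bullet \boldsymbol{g}^t_{\boldsymbol{\mathfrak x}}) = \boldsymbol{p}(\boldsymbol{g}^s_{\boldsymbol{\mathfrak x}'}) + \boldsymbol{p}(\boldsymbol{g}^t_{\boldsymbol{\mathfrak x}}) = \boldsymbol{p}(\boldsymbol{g}^t_{\boldsymbol{\mathfrak x}} \bullet \boldsymbol{g}^s_{\boldsymbol{\mathfrak x}'})$; hence the two products have the same image under $\boldsymbol{p}$ and, by the exact sequence $1 \to \boldsymbol{Z} \to \boldsymbol{X} \xrightarrow{\boldsymbol{p}} \mathbb{R}^2 \to 1$, differ by an element of $\ker \boldsymbol{p} = \boldsymbol{Z}$. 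This structural remark reduces the whole statement to computing a single real number, the $z$-coordinate of that central defect.

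The remaining step is the explicit computation of that coordinate from the group law. Only the non-additive term $x y'$ in the third coordinate of $\boldsymbol{x} \bullet \boldsymbol{x}'$ can create a discrepancy between the two orders of multiplication: for $\boldsymbol{g}^s_{\boldsymbol{\mathfrak x}'} \bullet \boldsymbol{g}^t_{\boldsymbol{\mathfrak x}}$ it contributes $(\alpha' s)(\beta t)$, and for $\boldsymbol{g}^t_{\boldsymbol{\mathfrak x}} \bullet \boldsymbol{g}^s_{\boldsymbol{\mathfrak x}'}$ it contributes $(\alpha t)(\beta' s)$, whereas the additive pieces and the two self-interaction terms $\frac{\alpha\beta}{2}t^2$ and $\frac{\alpha'\beta'}{2}s^2$ coincide. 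Their difference is the antisymmetric form $(\alpha'\beta - \alpha\beta')\,st = \Delta\, st$. Finally, the ``in particular'' clause is the specialization $\boldsymbol{\mathfrak x}' = \begin{pmatrix} 0 \\ 0 \\ 1 \end{pmatrix}$, for which $\Phi^s_{\boldsymbol{\mathfrak x}'} = \Psi^s$ and $\Delta = 0$, so the correction factor disappears and the two flows commute.

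The calculation is entirely routine; the one point demanding care is the bookkeeping of the cross-term, and the genuinely useful observation is that the defect is forced into the center by the exact sequence, so no component outside $\boldsymbol{Z}$ can survive. I note that this bilinear bookkeeping produces the exponent $\Delta\, st$ rather than $\Delta(t-s)$, and I would reconcile this with the intended normalization of the flows before finalizing the statement.
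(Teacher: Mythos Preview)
Your argument is correct and follows essentially the same route as the paper: the paper simply writes out the two compositions applied to a generic point and leaves the reader to subtract the third coordinates, whereas you frame the same computation more structurally by first observing, via the exact sequence $1 \to \boldsymbol{Z} \to \boldsymbol{X} \xrightarrow{\boldsymbol{p}} \mathbb{R}^2 \to 1$, that the defect must be central, and then computing only the cross-term. Both approaches amount to the same subtraction.

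Your final remark is also well taken: carrying out the paper's own subtraction of the two displayed $z$-coordinates gives $(y+t\beta)s\alpha' - (y+s\beta')t\alpha$ minus the common $y$-terms, i.e.\ $st(\beta\alpha' - \beta'\alpha) = \Delta\, st$, not $\Delta(t-s)$. So the exponent in the lemma statement is a typographical slip for $\Delta\, st$; your computation is the correct one, and the ``in particular'' clause (commutation with $\Psi$) is unaffected since in that case $\Delta = 0$ either way.
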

 
\begin{proof}
Just calculate the following expressions:
$$
\begin{array}{cccccc}
& &
\Phi _{\alpha',\beta',\gamma'} ^s \circ \Phi _{{\alpha,\beta,\gamma} } ^t  \begin{bmatrix}  x \\ y  \\  z  \end{bmatrix}
&  = &
  \begin{bmatrix}  x +   t \alpha + s \alpha '\\ y +   t \beta + s \beta '\\ 
 z  + y t \alpha + (y + t \beta) s \alpha ' +  \gamma t + \gamma ' s + \frac{\alpha \beta }{2} t^2  +  \frac{\alpha' \beta' }{2} s^2
 \end{bmatrix} \\
&\mbox{ and }&
\Phi _{{\alpha,\beta,\gamma}  } ^t \circ \Phi _{\alpha',\beta',\gamma'} ^s \begin{bmatrix}  x \\ y  \\  z  \end{bmatrix}
&  = &
  \begin{bmatrix} 
x +   s \alpha ' +  t \alpha   \\
y +   s \beta ' + t \beta \\ 
 z  + y s \alpha' + (y + s \beta') t \alpha  +
 \gamma' s + \gamma  t + \frac{\alpha' \beta' }{2} s^2
 +  \frac{\alpha \beta }{2} t^2
 \end{bmatrix} .
 \end{array}$$
\end{proof}
\noindent
The group norm verifies some properties with respect to the introduced objects.
For any element $\boldsymbol{x} \in \boldsymbol{X}$,
the flow $\Phi_{\log \boldsymbol{x}} $
is the unique flow satisfying
$\Phi^1(\boldsymbol{0}) = \boldsymbol{x}$. 
If $\boldsymbol{x}=[x,y,z]$,
the group norm verifies:
$$
\left | \left |  \Phi ^t _{\log \boldsymbol{x}} ( \boldsymbol{1})  \right | \right | _{\boldsymbol{X}}
=
 \left( t^4 \left( x^2+y^2 \right)^2 +t^2 \left( z - \frac{xy}{2} \right)^2 \right)^{\frac{1}{4}}.
$$
For every real $t$, we also consider the expansion of space
$ \mathcal D ^t :  \boldsymbol{X}  \longrightarrow \boldsymbol{X}$,
such that
$ \mathcal D ^t ([x,y,z])= [xt , y t , z t^2 ] $.
The group norm verifies:
$
\left | \left |  \mathcal D ^t \boldsymbol{x} \right | \right | _{\boldsymbol{X}}
 = \mid t \mid  \cdot \left | \left | \boldsymbol{x} \right | \right | _{\boldsymbol{X}}
\mbox{ for }
\boldsymbol{x}\in \boldsymbol{X}.
$
\vspace{6pt}
\\
A significant difference with the abelian situation, is that for every real $t\notin \{ 0,1\}$,
the application of $ \boldsymbol{X}$ into itself defined by:
$
 \boldsymbol{x}  \mapsto   \Phi ^t _{\log \boldsymbol{x}} ( \boldsymbol{1}),  
$
is not a group homomorphism of $\boldsymbol{X}$.
For more details on the left invariant metric of this group,
we refer to  \cite{MR2035027}, \cite{MR1106594}, \cite{LEE} and \cite{MR2478814}.

\section{Symbolic approach}
\label{nil:se:sym}

We start by proving Proposition \ref{nil:prop:autosubs},
which makes the link between the automorphisms of
the free group on two generators and the morphisms of the lattice
$\mathbb H_3(\mathbb Z) = \boldsymbol{\Gamma}$.
The generators of this lattice will be noted: 
$$
\boldsymbol{n}_a =   \begin{bmatrix}  1 \\ 0  \\  0  \end{bmatrix}
\mbox{, }
\boldsymbol{n}_b =   \begin{bmatrix}  0 \\ 1  \\  0  \end{bmatrix}.
\mbox{  Also let  }
\boldsymbol{n}_{a^{-1}} = \boldsymbol{n}_a^{-1}
\mbox{, }
\boldsymbol{n}_{b^{-1}} = \boldsymbol{n}_b^{-1}
\mbox{ and }
\boldsymbol{n} = [ \boldsymbol{n}_a,\boldsymbol{n}_b ] = 
  \begin{bmatrix}  0 \\ 0 \\  1  \end{bmatrix} \in \boldsymbol{Z}\cap \boldsymbol{\Gamma}.
$$
Let $ \sigma $ be a automorphism of the free group $ \mathbb F_2 $. It can be written 
\begin{equation}
\label{nil:eq:substitution}
\sigma :
\left\{
\begin{array}{cccc}
a & \longrightarrow &  \xi_1 \dots \xi_{l_a} \\
b  & \longrightarrow &  \zeta_1\dots \zeta_{l_b}
\end{array}
\right.
\mbox{ with each $(\xi_i)_{1\leq i\leq l_a}$ and $(\zeta_i)_{1\leq i\leq l_b}$  in $\{a,b,a^{-1},b^{-1} \}$.  }
\end{equation}
\begin{equation}
\label{eq:ra}
\mbox{We associate to $\sigma$ the endomorphism $\mathfrak{S}_{\sigma}$ of $\Gamma$ defined by }
 \mathfrak S_\sigma (\boldsymbol{n} _a) =  \boldsymbol{n}_{\xi_1} \dots   \boldsymbol{n}_{\xi_{l_a}}
 \mbox{ and }
 \mathfrak S_\sigma (\boldsymbol{n} _b) = \boldsymbol{n}_{\zeta_1} \dots   \boldsymbol{n}_{\zeta_{l_b}}. 
 \end{equation}
This object is well defined since
$\mathfrak S_\sigma (\boldsymbol{n}) =  \mathfrak S_\sigma (\boldsymbol{n} _a) 
 \cdot  \mathfrak S_\sigma (\boldsymbol{n} _b) \cdot  \mathfrak S_\sigma (\boldsymbol{n} _a) ^{-1} \cdot \mathfrak S_\sigma (\boldsymbol{n} _b)^{-1}$.
\begin{proof}[Proof of Proposition \ref{nil:prop:autosubs}]
The application $\mathfrak S : $Aut$(\mathbb F_2)\to$ End$(\boldsymbol{\Gamma})$ is a morphism. 
We will show that $\mathfrak S ($Aut$(\mathbb F_2))=$Aut$(\boldsymbol{\Gamma})$.
\textit{We start by showing that for every $\sigma\in $ Aut$(\mathbb F_2)$, then $\mathfrak S_\sigma \in$ Aut$(\boldsymbol{\Gamma})$.}
We note $M$ the action of $\boldsymbol{p} \circ \mathfrak S_\sigma$
on $\mathbb R^2$. From Equation (\ref{nil:eq:hom}), we know that $\mathfrak S_\sigma (\boldsymbol{n})  = det(M) \boldsymbol{n}$.
Since $det(M) \in \{-1,1\}$, $\mathfrak S_\sigma $ is an automorphism of $\boldsymbol{\Gamma}$.
For more details, we refer to \cite{MR1970385} and \cite{MR924156}.
\textit{It only remains to verify that the map $ \mathfrak {S} $ is surjective.}
\vspace{6pt}
\\
Consider an endomorphism $ \mathcal L $ given 
by equation (\ref{nil:eq:hom}) of Section \ref{intro:se:heis}. We have 
$
\mathcal L(\boldsymbol{n} _a)   =  \begin{bmatrix}  x_1 \\ x_2  \\  x_3  \end{bmatrix}
\mbox{ and }
\mathcal L(\boldsymbol{n} _b)   =  \begin{bmatrix}  y_1 \\ y_2  \\  y_3  \end{bmatrix} .
$
We define the automorphisms $\sigma_1$, $\sigma_2$, $\sigma_3$ and $\sigma_4$, defined by:
$$
\sigma_1 : \left\{ \begin{array}{cccc} a & \longrightarrow & ab\\ b  & \longrightarrow &  b \end{array} \right. ,
\sigma_2 : \left\{ \begin{array}{cccc} a & \longrightarrow & ab\\ b  & \longrightarrow &  a \end{array} \right. ,
\sigma_3 : \left\{ \begin{array}{cccc} a & \longrightarrow & a\\ b  & \longrightarrow &  ba \end{array} \right. \mbox{ and }
\sigma_4 : \left\{ \begin{array}{cccc} a & \longrightarrow & b\\ b  & \longrightarrow &  ab \end{array} \right.
.
$$
We  write ${\mathfrak S}_i=\mathfrak S_{\sigma_i}$, for $i\in \{1,2,3,4\}$.
It is conventional to verify that there exits an integer $k$, $(n,m)\in \mathbb Z^2$, $(u_j)\in\{1,4\}^k$, and $(\epsilon_j)\in \{-1,1\}^k$ such that
$$
 \prod \limits_{j=1}^k  {\mathfrak S} _{u_j} ^{\epsilon_j} \circ \mathfrak S (\boldsymbol{n}_a) =  \boldsymbol{n}_a \bullet 
 \begin{bmatrix}  0 \\ 0 \\  n  \end{bmatrix} 
 \mbox{ and }
 \prod \limits_{j=1}^k  {\mathfrak S} _{u_j} ^{\epsilon_j} \circ \mathfrak S (\boldsymbol{n}_b) = \boldsymbol{n}_b \bullet 
 \begin{bmatrix}  0 \\ 0 \\  m  \end{bmatrix} .
$$
Then define the following automorphisms:
$$
\sigma_5 :
\left\{
\begin{array}{cccc}
a & \longrightarrow & b^{-1} a b\\
b  & \longrightarrow &  b
\end{array}
\right.,
\sigma_6 :
\left\{
\begin{array}{cccc}
a & \longrightarrow & a \\
b  & \longrightarrow &  a^{-1} b a
\end{array}
\right.
\mbox{ and 
 ${\mathfrak S}_i=\mathfrak S_{\sigma_i}$, for $i\in \{5,6\}$. }
 $$
By a calculation, we can verify that
$$
( {\mathfrak S} _ 5 )^n
 \circ
 ( {\mathfrak S} _ 6 )^{-m}
  (\boldsymbol{n}_a) =  \boldsymbol{n}_a \bullet 
 \begin{bmatrix}  0 \\ 0 \\  n  \end{bmatrix} 
 \mbox{ and }
( {\mathfrak S} _ 6 )^n
 \circ
 ( {\mathfrak S} _ 5 )^{-m} (\boldsymbol{n}_b) = \boldsymbol{n}_b \bullet 
 \begin{bmatrix}  0 \\ 0 \\  m  \end{bmatrix}.
$$
So, with $\sigma =  \prod \limits_{j=1}^k  {\sigma} _{u_{k-j+1}} ^{-\epsilon_{k-j+1}} \circ ( {\mathfrak S} _ 6 )^n
 \circ ( {\mathfrak S} _ 5 )^{-m}$, we have $\mathcal L =\mathfrak S_\sigma$.
\end{proof}

\noindent
Throughout this work, we deal with the general case.
However, we will treat the \textbf{Fibonacci substitution}  to illustrate our results:
$$
\tau : \left\{
\begin{array}{cccc}
a & \rightarrow & a b,\\
b & \rightarrow & a .\\
\end{array} \right. 
$$
We denote by $u=(u_k)_{k\geq 1}= abaaaba \dots \in \{a,b\}^{\mathbb N}$
the infinite word, fixed point of this substitution, and
$\phi = \frac{1+\sqrt{5}}{2}$
the golden mean.
We begin with the Fibonacci substitution. 
We define a sequence $(\boldsymbol{x_k})_{k\geq 0} \in \boldsymbol{X}^{\mathbb N}$ as follows: 
$$
\boldsymbol{x_0} \ = \   \boldsymbol{1}
\mbox{ and for $k\geq 1$: }  \boldsymbol{x_{k+1}} \ = \ \boldsymbol{x_k} \bullet \boldsymbol{n_{u_k}} .
$$
We call this sequence, \textbf{the broken line} associated with the substitution $\tau$ in $\boldsymbol{X}$.
For any integer $k$, we write:
$$
\boldsymbol{x_k}  = 
 \boldsymbol{1}  \bullet \prod \limits_{i=1} ^k \boldsymbol{n_{u_i}} = 
\begin{bmatrix} a_k  \\ b_k \\ c_k  \end{bmatrix}.
$$
A direct calculation shows that $a_0=b_0=c_0=0$, and for any integer $k\geq 1$:
$$
a_k = \# \left\{ 1\leq i \leq k \mbox{ ; } u_i = a \right\}
\mbox{ , }
b_k = \# \left\{ 1\leq i \leq k \mbox{ ; } u_i = b \right\}
\mbox{ and } c_k  = \# \left\{ 1\leq i <j \leq k \mbox{ ; } u_i = a \mbox{ and } u_j= b  \right\}.
$$
For any integer $k$, the quantity $c_k$ can be viewed ``geometrically'' 
by the area of the gray zone in Figure \ref{nil:fig:ck}.
\begin{figure}[H]
\centering
\includegraphics[width=9cm]{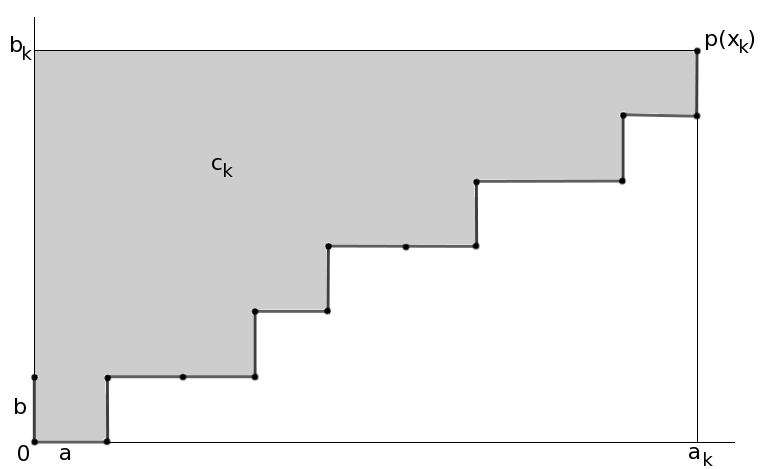}
\caption{Projections by $\boldsymbol{p} $ of the broken line $(\boldsymbol{x}_k)_k$ in $\mathbb R^2$.}
\label{nil:fig:ck}
\end{figure}
\noindent
The challenge is to find an element 
$ \boldsymbol{g} =
\begin{bmatrix} \alpha  \\ \beta \\ \gamma  \end{bmatrix}
\in \boldsymbol{X}$
such that the sequence
$(\boldsymbol{g} ^k  \bullet \boldsymbol{x_k})_k$ is bounded.
\vspace{6pt}
\\
In particular, in order to bound the sequence of elements
$\Big{(} \boldsymbol{p}(\boldsymbol{g} ^k (\alpha,\beta,\gamma) \bullet \boldsymbol{x_k} \Big{)}_k$
of $\mathbb R^2$, the element $\boldsymbol{g}$ should be choosen as: 
$$
\boldsymbol{g}_\theta = 
\begin{pmatrix} 1 & \frac{-1}{\phi} & \theta\\ 0 & 1 & \frac{-1}{\phi^2} \\ 0 & 0 & 1 \end{pmatrix}
=
\begin{bmatrix}  \frac{-1}{\phi} \\ \frac{-1}{\phi^2} \\  \theta \end{bmatrix}
.
$$
For this reason, we focus in Section \ref{nil:se:ex}  on the left action of matrices
$\boldsymbol{g}_\theta$ on the quotient space $\underline{\boldsymbol{X}}$.
\vspace{6pt}
\\
For any automorphism $\sigma$ of $\mathbb F_2$, we will use the following notation:
\begin{equation}
\label{nil:eq:auto}
M_\sigma = [ m^{\epsilon,\epsilon'} ]_{(\epsilon,\epsilon') \in \{a,b \}^2}
= \boldsymbol{p}\circ \mathfrak S _\sigma \mbox{ and }
\mathfrak S _\sigma: 
\begin{array}{cccc}
\boldsymbol{X}  & \rightarrow & \boldsymbol{X}  \\
\begin{bmatrix} x  \\ y \\ z  \end{bmatrix} & \mapsto & 
\begin{bmatrix} m^{a,a} x + m^{a,b} y  \\ m^{b,a} x + m^{b,b} y  \\ 
\det (M_\sigma) z + P_\sigma(x,y)  \end{bmatrix}\\
\end{array}  ,
\end{equation}
$$
\mbox{ where }
P_\sigma (x,y) = \frac{m^{a,a}m^{b,a}}{2} x(x-1) + \frac{m^{a,b}m^{b,b}}{2} y(y-1)+ m^{a,b} m^{b,a} xy 
+ n^{a,b}_a x + n^{a,b}_b y.
$$
We  notice immediately that the map $ \mathfrak S _ \sigma $ 
is invertible if and only if the matrix 
$ M_\sigma $ is itself invertible.\textit{We will always assume this to hold.} 
For the Fibonacci substitution, this automorphism is:
\begin{equation}
\label{nil:eq:fauto}
\mathfrak S _\tau : 
\begin{array}{cccc}
\boldsymbol{X}  & \rightarrow & \boldsymbol{X} \\
\begin{bmatrix} x  \\ y \\ z  \end{bmatrix} & \mapsto & 
\begin{bmatrix} x+y  \\ x \\ -z+x(x+1)/2+xy  \end{bmatrix}\\
\end{array}  .
\end{equation}
In the proof of the following proposition we will see that under some assumptions on the matrix $ M_\sigma $,
we can associate to these automorphisms, some characteristic flows.

\begin{proposition}
\label{nil:prop:flotcomm}
Let $\lambda$ be a real eigenvalue of the matrix 
$M_\sigma$ which is not equal to
the determinant of the matrix.
$M_\sigma$. Let $(\alpha,\beta)$ be an eigenvector of the matrix 
associated to the eigenvalue $\lambda$.
Then, there exists a unique real $\gamma$, such that
the flow $\Phi  _{\alpha,\beta,\gamma}^t$ satisfies: 
$$
\mathfrak S \circ \Phi _{\alpha,\beta,\gamma} ^t   \circ \mathfrak S ^{-1} =  \Phi _{\alpha,\beta,\gamma}^{\lambda t} .
$$
The value of $\gamma$ is
$
\gamma =   \frac{ \alpha}{\lambda - \det(M_\sigma)} 
\left(  n_a ^{a,b} - \frac{m^{a,a}m^{b,a}}{2} \right)
+  \frac{ \beta }{\lambda - \det(M_\sigma)} 
\left(  n_b ^{a,b} - \frac{m^{b,a}m^{b,b}}{2} \right).
$
\end{proposition}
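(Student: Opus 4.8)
The plan is to use that $\mathfrak{S}_{\sigma}$ is a group automorphism of $\boldsymbol{X}$, as established in Proposition~\ref{nil:prop:autosubs}, in order to reduce the conjugacy of flows to a single pointwise identity on a one-parameter subgroup. By~(\ref{nil:eq:flot}) the flow $\Phi_{\alpha,\beta,\gamma}^{t}$ is exactly left multiplication by $\boldsymbol{g}_{\boldsymbol{\mathfrak{x}}}^{t}=\exp(t\cdot\boldsymbol{\mathfrak{x}})=[\alpha t,\beta t,\gamma t+\frac{\alpha\beta}{2}t^{2}]$. Since $\mathfrak{S}_{\sigma}$ is a homomorphism, for every $\boldsymbol{x}\in\boldsymbol{X}$ one has $\mathfrak{S}_{\sigma}\circ\Phi_{\alpha,\beta,\gamma}^{t}\circ\mathfrak{S}_{\sigma}^{-1}(\boldsymbol{x})=\mathfrak{S}_{\sigma}\big(\boldsymbol{g}_{\boldsymbol{\mathfrak{x}}}^{t}\bullet\mathfrak{S}_{\sigma}^{-1}(\boldsymbol{x})\big)=\mathfrak{S}_{\sigma}(\boldsymbol{g}_{\boldsymbol{\mathfrak{x}}}^{t})\bullet\boldsymbol{x}$, so the conjugated flow is again a left translation, by $\mathfrak{S}_{\sigma}(\boldsymbol{g}_{\boldsymbol{\mathfrak{x}}}^{t})$. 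As $\Phi_{\alpha,\beta,\gamma}^{\lambda t}$ is left translation by $\boldsymbol{g}_{\boldsymbol{\mathfrak{x}}}^{\lambda t}$, the claimed identity is equivalent to the single requirement
\[
\mathfrak{S}_{\sigma}(\boldsymbol{g}_{\boldsymbol{\mathfrak{x}}}^{t})=\boldsymbol{g}_{\boldsymbol{\mathfrak{x}}}^{\lambda t}\qquad\mbox{for all }t\in\mathbb{R}.
\]
Everything then reduces to evaluating $\mathfrak{S}_{\sigma}$ on the explicit element $\boldsymbol{g}_{\boldsymbol{\mathfrak{x}}}^{t}$ via formula~(\ref{nil:eq:auto}) and matching coordinates with $\boldsymbol{g}_{\boldsymbol{\mathfrak{x}}}^{\lambda t}=[\alpha\lambda t,\beta\lambda t,\gamma\lambda t+\frac{\alpha\beta}{2}\lambda^{2}t^{2}]$.

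Next I would treat the three coordinates separately. The first two coordinates of $\mathfrak{S}_{\sigma}(\boldsymbol{g}_{\boldsymbol{\mathfrak{x}}}^{t})$ are $(m^{a,a}\alpha+m^{a,b}\beta)t$ and $(m^{b,a}\alpha+m^{b,b}\beta)t$, which equal $\alpha\lambda t$ and $\beta\lambda t$ precisely because $(\alpha,\beta)$ is an eigenvector of $M_{\sigma}$ for $\lambda$; these match with no constraint on $\gamma$. All the content therefore sits in the third coordinate, which equals $\det(M_{\sigma})\big(\gamma t+\frac{\alpha\beta}{2}t^{2}\big)+P_{\sigma}(\alpha t,\beta t)$. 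Expanding $P_{\sigma}(\alpha t,\beta t)$ and collecting powers of $t$ gives a polynomial of the form $a_{2}t^{2}+a_{1}t$ with vanishing constant term, to be compared with $\frac{\alpha\beta}{2}\lambda^{2}t^{2}+\gamma\lambda t$.

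The main obstacle is the degree-two matching, which must hold identically in the entries of $M_{\sigma}$ and the eigenvector components. I would verify it by squaring the eigenvalue relation: $\lambda^{2}\alpha\beta=(\lambda\alpha)(\lambda\beta)=(m^{a,a}\alpha+m^{a,b}\beta)(m^{b,a}\alpha+m^{b,b}\beta)$. Expanding the right-hand side and substituting $\det(M_{\sigma})=m^{a,a}m^{b,b}-m^{a,b}m^{b,a}$, one checks term by term that $\frac{\alpha\beta}{2}\lambda^{2}$ coincides with $a_{2}=\det(M_{\sigma})\frac{\alpha\beta}{2}+\frac{m^{a,a}m^{b,a}}{2}\alpha^{2}+\frac{m^{a,b}m^{b,b}}{2}\beta^{2}+m^{a,b}m^{b,a}\alpha\beta$; this is a purely algebraic identity and involves nothing about $\gamma$.

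Finally, the degree-one matching produces the only genuine equation, namely $\det(M_{\sigma})\gamma+\alpha\big(n_{a}^{a,b}-\frac{m^{a,a}m^{b,a}}{2}\big)+\beta\big(n_{b}^{a,b}-\frac{m^{a,b}m^{b,b}}{2}\big)=\lambda\gamma$. Since by hypothesis $\lambda\neq\det(M_{\sigma})$, the factor $\lambda-\det(M_{\sigma})$ is nonzero, so this linear equation in $\gamma$ has exactly one solution, which is the value displayed in the statement. This delivers both existence (that $\gamma$ makes all three coordinates agree, hence $\mathfrak{S}_{\sigma}(\boldsymbol{g}_{\boldsymbol{\mathfrak{x}}}^{t})=\boldsymbol{g}_{\boldsymbol{\mathfrak{x}}}^{\lambda t}$ for every $t$) and uniqueness (any other choice fails the degree-one comparison), which completes the proof.
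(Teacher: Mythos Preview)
Your argument is correct, and it is a genuine simplification of the paper's proof. The paper does not first invoke the homomorphism property of $\mathfrak{S}_{\sigma}$; instead it applies $\mathfrak{S}_{\sigma}\circ\Phi^{t}$ and $\Phi^{\lambda t}\circ\mathfrak{S}_{\sigma}$ to a general point $[x,y,z]$ and equates the results, producing a system (called $(\mathrm{S}_{1})$, then $(\mathrm{S}_{2})$) in which the third coordinate must match as a polynomial in the four variables $t,x,y,t^{2}$. Three of those four conditions turn out to be automatic consequences of the eigenvector relation, and the fourth yields the equation for $\gamma$. Your reduction $\mathfrak{S}_{\sigma}\circ\Phi^{t}\circ\mathfrak{S}_{\sigma}^{-1}(\boldsymbol{x})=\mathfrak{S}_{\sigma}(\boldsymbol{g}^{t})\bullet\boldsymbol{x}$ collapses this to the single identity $\mathfrak{S}_{\sigma}(\boldsymbol{g}^{t})=\boldsymbol{g}^{\lambda t}$, so only the $t$ and $t^{2}$ coefficients need to be matched, and the intermediate checks involving $x$ and $y$ disappear. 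What you gain is economy; what the paper's approach buys is that it never needs to appeal to $\mathfrak{S}_{\sigma}$ being a group homomorphism (it proceeds purely from the explicit formula~(\ref{nil:eq:auto})), though that fact is of course available from~(\ref{nil:eq:hom}).
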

\begin{proof}
We denote the flow $\Phi_{\alpha,\beta,\gamma}$ defined in (\ref{nil:eq:flot}) by $\Phi$.
A direct calculation gives: 
$$
\mathfrak S \circ \Phi  ^t \begin{bmatrix} x  \\ y \\ z  \end{bmatrix} 
=
\begin{bmatrix} 
m^{a,a} x + m^{a,b} y + t ( m^{a,a} \alpha + m^{a,b} \beta ) \\
m^{b,a} x + m^{b,b} y + t ( m^{b,a} \alpha + m^{b,b} \beta )  \\ 
\left[ z + y t \alpha +  \gamma t + \frac{\alpha \beta }{2}  t ^2 \right] \det(M_\sigma)
+ P_\sigma (x+t\alpha,y+t \beta)
 \end{bmatrix} 
$$
$$
\mbox{ and }
\Phi  ^{\lambda t} \circ \mathfrak S  \begin{bmatrix} x  \\ y \\ z  \end{bmatrix} 
=
\begin{bmatrix} 
m^{a,a} x + m^{a,b} y + \lambda t \alpha  .\\
m^{b,a} x + m^{b,b} y + \lambda  t \beta   .\\ 
 z \det(M_\sigma) + \lambda t \alpha ( m^{b,a}x + m^{b,b} y ) +
 \gamma \lambda t  + \frac{\alpha \beta }{2}  ( \lambda t )^2
 + P_\sigma (x,y)
 \end{bmatrix} .
$$
It is therefore necessary to solve the system: 
$$
\mbox{ (S$_1$) : }
\left\{
\begin{array}{llll}
& &\begin{array}{cccc}
m^{a,a} x + m^{a,b} y + t ( m^{a,a} \alpha + m^{a,b} \beta )   
& = &
m^{a,a} x + m^{a,b} y + \lambda t \alpha ,
\\
m^{b,a} x + m^{b,b} y + t ( m^{b,a} \alpha + m^{b,b} \beta )  
& =  &
m^{b,a} x + m^{b,b} y + \lambda  t \beta ,
\\
\end{array} \\
& &
\left[ z + y t \alpha +  \gamma t + \frac{\alpha \beta }{2}  t ^2 \right] \det(M_\sigma)
+ P_\sigma (x+t\alpha,y+t \beta) =
\\
& &\mbox{   }\mbox{   }\mbox{   }\mbox{   }\mbox{   }
 z \det(M_\sigma) + \lambda t \alpha ( m^{b,a}x + m^{b,b} y ) +
 \gamma \lambda t  + \frac{\alpha \beta }{2}  ( \lambda t )^2
 + P_\sigma (x,y).
 \end{array}
\right.
$$
Since the vector $ (\alpha, \beta) $ is an eigenvector associated to the eigenvalue $\lambda $,
the first two equations are verified. It remains to consider the third equation. 
It is solved as follows: 
$$
\begin{array}{llll}
P_\sigma(x+t\alpha , y + t \beta ) &  = & 
P_\sigma(x,y) + \frac{m^{a,a} m^{b,a}}{2} (2x-1+t \alpha) t \alpha + \frac{m^{a,b} m^{b,b}}{2} (2y-1+t \beta)t \beta \\
& & +  m^{a,b} m^{b,a} t ( \alpha y + \beta x + t \alpha \beta) + n_a ^{a,b} t \alpha +  n_b ^{a,b} t \beta.
 \end{array}
$$
We must respectively cancel the terms in $t^2$, $x$, $y$ and $t$
in the third line of system (S$_1$). Thus, we must solve the system: 
$$
\mbox{ (S$_2$) : }
\left\{
\begin{array}{cccc}
\frac{\alpha \beta}{2}  \lambda ^2  & = &\frac{\alpha \beta}{2} \det(M_\sigma) + \alpha^2  \frac{m^{a,a} m^{b,a}}{2} 
+ \beta ^2  \frac{m^{a,b} m^{b,b}}{2} + \alpha \beta m^{a,b} m^{b,a},\\
\lambda \alpha m^{b,a} & = &   m^{a,a} m^{b,a} \alpha +m^{a,b} m^{b,a}  \beta ,\\
\lambda \alpha m^{b,b} & = &  \alpha \det(M_\sigma) + m^{a,b} m^{b,b}  \beta +  m^{a,b} m^{b,a}   \alpha ,\\
\gamma \lambda & = & 
\gamma \det(M_\sigma)  -
 \alpha  \frac{m^{a,a} m^{b,a}}{2}  -  \beta  \frac{m^{a,b} m^{b,b}}{2} 
+ n_a ^{a,b} \alpha +  n_b ^{a,b}  \beta.
\end{array} 
\right. 
$$
Since $ (\alpha, \beta) $ is an eigenvector of the matrix, 
the first three equations of system (S$_2 $) are always satisfied. Indeed, we observe: 
$$
\left\{
\begin{array}{cccc}
(m^{a,a}\alpha + m^{a,b} \beta)(m^{b,a}\alpha + m^{b,b} \beta) & = &
(\lambda \alpha) \cdot (\lambda \beta) =  \alpha \beta \lambda^2 , \\
\mbox{ et } m^{a,a}m^{b,b} - m^{a,b}m^{b,a} & = & \det(M_\sigma).
\end{array}
\right.
$$
The last line of system (S$_2$) has a solution if $\lambda \neq \det(M_\sigma)$  and we find (\ref{nil:eq:gamma}):
$$
\gamma = \frac{ \alpha}{\lambda - \det(M_\sigma)} 
\left(  n_a ^{a,b} - \frac{m^{a,a}m^{b,a}}{2} \right)
+  \frac{\beta}{\lambda - \det(M_\sigma)} 
\left(  n_b ^{a,b} - \frac{m^{b,a}m^{b,b}}{2} \right).
$$
 \end{proof}
 \noindent
For example, we can define these flows for the Fibonnaci substitution: 
\begin{equation}
\label{nil:eq:fflot}
 \Phi _{\phi} ^t   \begin{bmatrix} x  \\ y \\ z  \end{bmatrix}  =
 \begin{bmatrix} 
x +   t \frac{1}{\phi}  \\
y +   t \frac{1}{\phi^2}  \\ 
 z  + \frac{t(t+1)}{2\phi^3} + \frac{1}{\phi}yt
 \end{bmatrix} 
 \mbox{ and }
 \Phi _{-1/\phi} ^t  \begin{bmatrix} x  \\ y \\ z  \end{bmatrix}  =
 \begin{bmatrix} 
x +   t \frac{1}{\phi^2}  \\
y -   t \frac{1}{\phi}  \\ 
 z  + \frac{1}{\phi^2}  t - \frac{t(t-1)}{2\phi^3} + \frac{1}{\phi^2}yt
 \end{bmatrix} .
\end{equation}

\section{Example of a special niltranslation}
\label{nil:se:ex}

In this section,  we consider the left action of the matrix: 
$\underline{\boldsymbol{g}}_\theta = \begin{pmatrix} 1 & \frac{-1}{\phi} & \theta\\ 0 & 1 & \frac{-1}{\phi^2} \\ 0 & 0 & 1 \end{pmatrix}$
on the group $\underline{\boldsymbol{X}}$.
\vspace{6pt}
\\
We choose a fundamental domain of the quotient space $\underline{\boldsymbol{X}}$ depending on a parameter $s$:
$$
\boldsymbol{X}^s = 
\left\{ \boldsymbol{x}  = \begin{bmatrix} x \\ y \\ z \end{bmatrix} \mbox{ such that }  s \leq x  \leq 1+s \mbox{, } -s-1 \leq y \leq -s \mbox{ and } z\in [0,1] \right\}.
$$
Recall that, since the matrix $ \boldsymbol{n} $ introduced  in the previous section, is in the center of the group, we are free 
to quotient by the extremal coordinate ``$z$'' modulo $1$ at any time.  In particular, we can consider $ \theta $ modulo $ 1 $. 
\vspace{6pt}
\\
We restrict ourselves to study the action of  $\underline{\boldsymbol{g}}_\theta$ on:
$\boldsymbol{Y ^s} = \left\{ \boldsymbol{x}  = \begin{bmatrix} -y \\ y \\ z \end{bmatrix} \mbox{ such that } -s-1 \leq y \leq -s \mbox{ and } z\in [0,1] \right\}$.
\vspace{6pt}
\\
Since $ - \frac{1}{\phi}-  \frac{1}{\phi^2} \in \mathbb Z$, the element $\underline{\boldsymbol{g}}_\theta$ acts by translation on $\underline{\boldsymbol{Y}} ^s $.
Our goal will be to induce this application on: 
$$
\boldsymbol{Y ^s_{\mbox{\footnotesize Ind}}} = \left\{ \boldsymbol{x}  = \begin{bmatrix} -y \\ y \\ z \end{bmatrix} \mbox{ such that } -s-1   \leq y \leq -s - 1 + \frac{1}{\phi^2}   \mbox{ and } z\in [0,1] \right\}.
$$ 
\begin{figure}[H]
\centering
\includegraphics[width=9cm]{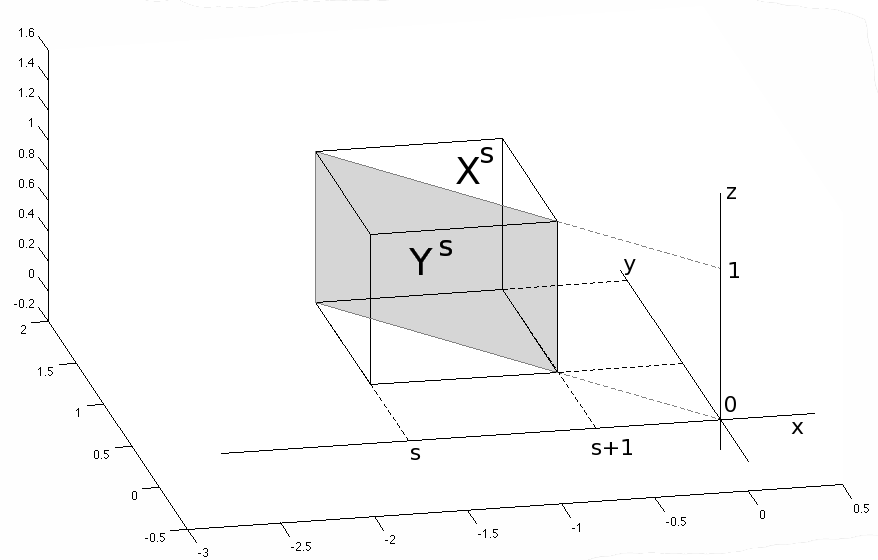}
\caption{Representation of $\boldsymbol{X}_s$ and $\boldsymbol{Y}_s$. }
\label{nil:fig:zoneniltranslationquotient}
\end{figure}
\noindent
We will prove the following result at the end of this section: 
\begin{proposition}
\label{nil:th:nil}
For any parameters $(s,s')\in \mathbb R^2$ and any angle $\theta \in \mathbb R$, there exists a map, called renormalization,
$\widetilde{\Phi} : \boldsymbol{Y ^s_{\mbox{\footnotesize  Ind}}}  \mapsto  \boldsymbol{Y ^{s'}}  $
and an angle $\theta'$, such that the first return application of $\underline{\boldsymbol{g}}_\theta$
on $\boldsymbol{Y ^s _{\mbox{ \footnotesize  Ind}}} $ is conjugated via $\widetilde{\Phi}$ to the action of $ \underline{\boldsymbol{g}}_\theta$ on
$ \boldsymbol{Y ^{s'}}  $. The angle $\theta'$ is given by:
$ \theta'=\phi^2 \theta + \phi^2 (s+1) - (s'+1)$.
\end{proposition}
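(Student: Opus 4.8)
The plan is to realize the action of $\underline{\boldsymbol{g}}_\theta$ on the diagonal section $\underline{\boldsymbol{Y}}^s$ as an explicit skew product over a circle rotation, to carry out the induction on $\boldsymbol{Y}^s_{\mbox{\footnotesize Ind}}$ at the level of the base rotation while transporting the center coordinate along, and finally to absorb the accumulated center cocycle into the renormalization. First I would parametrize $\boldsymbol{Y}^s$ by $(y,z)$ via the point $\begin{bmatrix} -y \\ y \\ z \end{bmatrix}$ and record the two identifications making $\underline{\boldsymbol{Y}}^s$ a $2$-torus: $z\sim z+1$, and along the diagonal $(y,z)\sim(y+1,z-y)$, the latter coming from right multiplication by $\begin{bmatrix} -1 \\ 1 \\ 0 \end{bmatrix}$. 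Computing $\underline{\boldsymbol{g}}_\theta \bullet \begin{bmatrix} -y \\ y \\ z \end{bmatrix}$ and then reducing by $x\mapsto x+1$ to land back on the diagonal (this uses $\tfrac1\phi+\tfrac1{\phi^2}=1$) gives the one-step map
$$
(y,z)\longmapsto \Big(\,y-\tfrac1{\phi^2}\,,\ z+\theta-\tfrac{y}{\phi}\,\Big),
$$
reduced modulo the identifications. Thus the base dynamics is the rotation $y\mapsto y-1/\phi^2$ (equivalently $y\mapsto y+1/\phi$) on the circle of length $1$, and the full map is a skew product over it with the affine cocycle $\theta-y/\phi$ together with the center jumps $z\mapsto z-y_{\mbox{\footnotesize temp}}$ produced whenever the diagonal reduction is invoked.

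Next I would determine the first return to the subinterval of length $1/\phi^2$ carved out by $\boldsymbol{Y}^s_{\mbox{\footnotesize Ind}}$. Using the golden identities ($1-2/\phi^2=1/\phi^3$, $1/\phi^2-1/\phi^4=1/\phi^3$, and so on) one checks that no point returns in one step and that the induced base map is an exchange of two intervals: a left piece of length $1/\phi^4$ with return time $2$ and translation $+1/\phi^3$, and a complementary piece of length $1/\phi^3$ with return time $3$ and translation $-1/\phi^4$. Dilating the $y$-interval by $\phi^2$ converts this $2$-interval exchange back into the rotation by $1/\phi$; this is the self-similarity of the golden rotation and it forces the renormalization factor to be exactly $\phi^2$. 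As a consistency check, the two lengths give mean return time $2/\phi^2+3/\phi=\phi^2$, in agreement with Kac's lemma. Along each of the two return itineraries I would accumulate the center cocycle, carefully carrying the $y$-dependent $z$-jumps from every diagonal reduction.

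I would then define $\widetilde\Phi:\boldsymbol{Y}^s_{\mbox{\footnotesize Ind}}\to\boldsymbol{Y}^{s'}$ as the affine map dilating the $y$-interval by $\phi^2$ onto the full interval of $\boldsymbol{Y}^{s'}$, namely $y\mapsto \phi^2(y+s+1)-(s'+1)$, together with a degree-one (shear) action on the center circle $z$; note that the area factor $\phi^2$ matches the ratio of the two fundamental domains, so no genuine dilation of $z$ is needed. The conjugacy $\widetilde\Phi\circ(\mbox{first return})=\underline{\boldsymbol{g}}_{\theta'}\circ\widetilde\Phi$ is verified by comparison of the base maps (immediate from the previous paragraph) and of the center cocycles. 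The decisive point is that the return-time-$2$ and return-time-$3$ contributions, each affine in $y$ but supported on different subintervals, must glue after rescaling into the single affine center shift $\theta'-y'/\phi$ of $\underline{\boldsymbol{g}}_{\theta'}$ on $\boldsymbol{Y}^{s'}$; the golden identities make the constant and linear parts match across the breakpoint, and reading off the constant part yields $\theta'=\phi^2\theta+\phi^2(s+1)-(s'+1)$, with the $\phi^2\theta$ emerging from this affine recombination (not from a scaling of $z$) and the remaining terms recording the two base-point offsets.

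The main obstacle is precisely this bookkeeping of lattice reductions: every crossing of the boundary of the fundamental domain injects a $y$-dependent jump into the center coordinate, and one must track these through the two distinct itineraries and show—where essentially all of the golden-ratio arithmetic is spent—that after renormalization they recombine into one genuine niltranslation rather than a merely piecewise one. I would expect the verification that the map $\widetilde\Phi$ is simultaneously the correct conjugacy on both pieces, respecting the diagonal identification of the target torus, to be the most delicate part; the self-similarity of the base rotation is what guarantees that such a $\widetilde\Phi$ exists at all.
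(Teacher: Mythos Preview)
Your outline is the paper's: pass to the diagonal section, write the action as the skew product $(y,z)\mapsto(y-1/\phi^{2},\,z+\theta-y/\phi)$ over the golden rotation, induce on the subinterval of length $1/\phi^{2}$ with return times $2$ and $3$, dilate the base by $\phi^{2}$, and read off $\theta'$. The paper does exactly this on an explicit fundamental square, computing the induced map piecewise and determining the conjugacy and $\theta'$ by solving two small linear systems.

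The one place your description understates what is needed is the form of $\widetilde\Phi$. The paper's renormalization is $\Phi(y,z)=(\phi^{2}y,\,ay^{2}+by+z)$ with a genuinely \emph{quadratic} shear, and the first of its two systems forces $a=-\phi^{3}\neq 0$. Your phrasing (``degree-one (shear)'' on $z$, two affine cocycles gluing into ``the single affine center shift $\theta'-y'/\phi$'') suggests you expect an affine coboundary in $y$ to suffice. It does not. After the $\phi^{2}$ dilation alone the two induced cocycles have $y$-slopes $-(1/\phi^{3}+1/\phi)$ and $-(2/\phi^{3}+1/\phi)$, while the target map on a fundamental domain has \emph{two} pieces with slopes $-\phi$ and $-1/\phi$; a linear shear $z\mapsto z+by$ only shifts constants under conjugation, never slopes, so it cannot repair this mismatch. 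The quadratic term contributes slope corrections proportional to the base translations, and since those differ on the two pieces ($+1/\phi^{3}$ versus $-1/\phi^{4}$) it is precisely what makes both slopes match simultaneously. Once $a=-\phi^{3}$ is fixed, $b$ and then $\theta'=\phi^{2}\theta+\phi^{2}(s+1)-(s'+1)$ drop out of the constant terms. Your closing paragraph correctly locates the delicate step; the ingredient you have not named is that the coboundary must be quadratic in $y$.
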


\noindent
In particular, for the parameters $s=s'=-1$ and $\theta=0$, the action $ \underline{\boldsymbol{g}}_\theta$ 
on $ \boldsymbol{Y ^{-1}}$ is conjugated to the application $T: \left( \mathbb R / \mathbb Z \right)^2 \longrightarrow \left( \mathbb R / \mathbb Z \right)^2 $, defined by:
$$
T(y,z) = (y-\frac{1}{\phi^2} \mbox{ mod }1, z + \psi (y)\mbox{ mod }1)
\mbox{ where } \psi(y) = -\phi y+ \frac{-1}{\phi} \mbox{ if $0\leq y \leq \frac{1}{\phi^2}$ and } \psi(y) = -\frac{y}{\phi} \mbox{ otherwise.}
$$
The above calculations assure us that this application is self-induced.  A direct calculation shows that: 
$$
\psi(0)=\psi(1)= \frac{-1}{\phi} \mbox{ and } \psi^+ \left( \frac{1}{\phi^2} \right)- \psi ^- \left( \frac{1}{\phi^2} \right) = -\frac{1}{\phi^3}- \left( -\frac{2}{\phi^2} \right) =-1. 
$$
The application $\psi$  defines a continuous and Lipschitz map in the torus into itself  of degree $1$.
Thus after the work of H. Furstenberg \cite{MR0133429}, the system is uniquely ergodic. We also shown that it is self-induced.
\vspace{6pt}
\\
We put $p(y) =- \frac{1}{2}y^2 -  \frac{1}{2} y$, then $\psi(y) = p(y-\frac{1}{\phi^2} \mbox{ mod }1) - p(y) -y +\frac{1}{2\phi^3}$ for all $y\in[0,1]$.
Thus the map $ T $ is conjugate with the application of the torus   $\left( \mathbb R / \mathbb Z \right)^2 $ into itself defined by: 
$$
(y,z) \mapsto \left( y-\frac{1}{\phi^2}  \mbox{ mod }1, z-y +\frac{1}{2\phi^3} \mbox{ mod }1 \right),
$$
which is self-induced. \textit{We have therefore proved the following result:}
\vspace{6pt}
\\
\textbf{Proposition \ref{nil:th:rautoin}.}\textit{
Let $\phi$ be the golden mean.
The dynamical system given by the application defined from $\left( \mathbb R / \mathbb Z \right)^2 $ into itself by
$(y,z)  \mapsto  \left( y+\frac{1}{\phi^2}  \mbox{ , }  z+y-\frac{1}{2\phi^3} \right)$,
is self-induced, minimal and uniquely ergodic. 
}
\begin{proof}[Proof of Proposition \ref{nil:th:nil}.]
Let us write explicitly how the matrix $\underline{\boldsymbol{g}}_\theta $ acts on the  fundamental domain $\boldsymbol{X}^s $.
\begin{figure}[H]
\centering
\includegraphics[width=8cm]{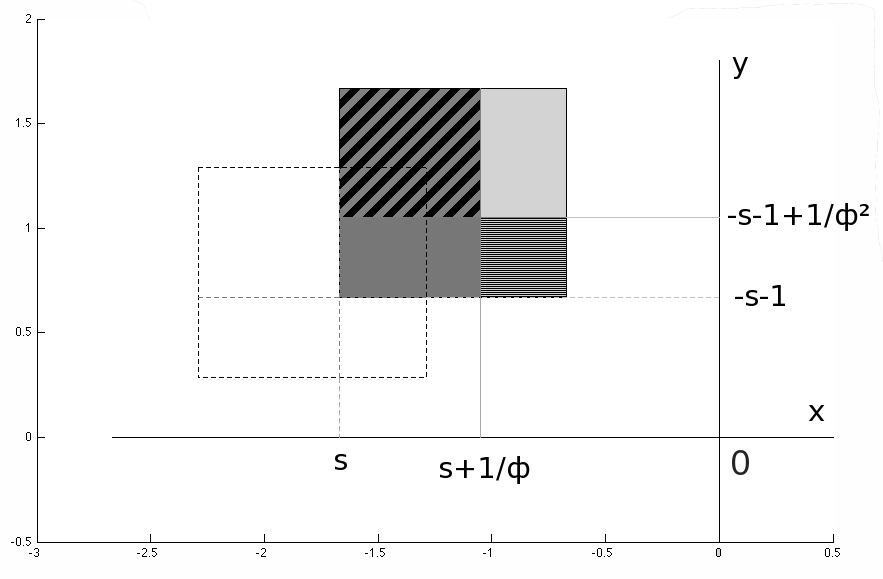}
\caption{Projection of the four areas of $\boldsymbol{X}^s$
which act on $\boldsymbol{g}_\theta$.}
\label{nil:fig:4zones}
\end{figure}
$$
\begin{array}{llllcc}
\mbox{If }
\left\{
\begin{array}{ll}
x\geq s+ \frac{1}{\phi}\\
 y \geq -s-1+\frac{1}{\phi^2}\\
 \end{array}
\right.
 &:&
 \boldsymbol{x} & \mapsto &
  \begin{bmatrix} x -\frac{1}{\phi} \\ y - \frac{1}{\phi^2 } \\ z - \frac{y}{\phi} + \theta \end{bmatrix} 
 \\
 \mbox{if }
\left\{
\begin{array}{ll}
x\leq s+ \frac{1}{\phi}\\
 y \geq -s-1+\frac{1}{\phi^2}
  \end{array}
\right.
 &:& 
 \boldsymbol{x} & \mapsto &
  \begin{bmatrix} x -\frac{1}{\phi}+1 \\ y - \frac{1}{\phi^2 } \\ z - \frac{y}{\phi} + \theta \end{bmatrix}
  \\
\end{array}
\begin{array}{cccccccc}
\mbox{, if }
\left\{
\begin{array}{ll}
x\geq s+ \frac{1}{\phi}\\
 y \leq -s-1+\frac{1}{\phi^2} \\
 \end{array}
\right.
  &:& 
 \boldsymbol{x} & \mapsto &
  \begin{bmatrix} x -\frac{1}{\phi} \\ y - \frac{1}{\phi^2 } + 1 \\ z +x -  \frac{y}{\phi} + \theta - \frac{1}{\phi} \end{bmatrix} 
  ,
  \\
\mbox{and if }
\left\{
\begin{array}{ll}
x\leq s+ \frac{1}{\phi} \\
 y \leq -s-1+\frac{1}{\phi^2}\\ 
 \end{array}
\right.
&:& 
 \boldsymbol{x} & \mapsto &
   \begin{bmatrix} x -\frac{1}{\phi} +1 \\ y - \frac{1}{\phi^2 } \\
    z + x - \frac{y}{\phi} + \theta + \frac{1}{\phi^2} \end{bmatrix}
 .  \\
\end{array}
$$
We fix parameters $(s,s',\theta)$ in $\mathbb R^3$ and we put:
$
\left\{\begin{array}{cccc}
 \mathcal S &  = & \{(y,z) \mbox{ such that } 0 \leq y \leq 1  \mbox{ and } 0\leq z \leq 1 \} ,\\
\mathcal S _{\mbox{\footnotesize Ind}}  & = &  \{(y,z) \mbox{ such that } 0  \leq y \leq  \frac{1}{\phi^2}    \mbox{ and } z \in[0,1] \}.
\end{array}\right.
$
\vspace{6pt}
\\
By ``forgetting'' for the moment, the first coordinated, the translation by $ \underline{\boldsymbol{g}} _\theta $
on $ \underline{\boldsymbol{X}}$ is conjugate to an application $T_{s}$ of $[-s-1,-s] \times [0,1]$ into itself defined by:
$$
\left\{ \begin{array}{ccccll}
T_s (y,z) & = &  \left( y- \frac{1}{\phi^2} + 1, z - \phi y +  \theta -  \frac{1}{\phi} \mbox{ mod }1 \right) &\mbox{if}& -s-1 \leq y \leq -s-1+ \frac{1}{\phi^2},\\
 T_s (y,z) & = &  \left(  y- \frac{1}{\phi^2} , z - \frac{y}{\phi }  + \theta  \mbox{ mod }1\right) &\mbox{if}& -s-1+  \frac{1}{\phi^2} \leq y \leq -s.
\end{array} \right.
$$ 
The application $T_s $ is conjugate by translation, to an application $ T ^ s $ of $ \mathcal S $ into itself defined by:
$$
\left\{
\begin{array}{ccccll}
T^s (y,z) & = &
 \left( y- \frac{1}{\phi^2} + 1, z - \phi y +  \theta -  \frac{1}{\phi} + (s+1)\phi \mbox{ mod }1 \right)
&\mbox{if}&
 0 \leq y \leq \frac{1}{\phi^2}.\\
 T^s (y,z) & = & 
\left(  y- \frac{1}{\phi^2} , z - \frac{y}{\phi }  + \theta + (s+1)/\phi \mbox{ mod }1\right)
&\mbox{if} &
 \frac{1}{\phi^2} \leq y \leq .
\end{array}
\right.
$$
We define the first return of the map $T^s$ of $\mathcal S_{\mbox{\footnotesize Ind}} $ into itself as follows: 
$$
T_{\mbox{\footnotesize Ind}} ^s (y,z) = {(T^s)}^{n_{y,z}} (y,z) \mbox{ where } n_{y,z}= \inf \Big{\{} n \in \mathbb N ^+ ; {(T^s)}^n(y,z) \in \mathcal S_{\mbox{\small Ind}}  \Big{\}}. 
$$
It is clear that $n_{y,z}=n_y$ only depends on $y$ (neither $z$, nor $s$, nor $\theta$), a simple calculation gives us: 
$$
n_y = 2 \mbox{ if $0 \leq y \leq  \frac{1}{\phi^4} $ and $n_y=3$ if $\frac{1}{\phi^4} \leq y \leq \frac{1}{\phi^2}$}.
$$
\begin{figure}[H]
\centering
\includegraphics[width=5.5cm]{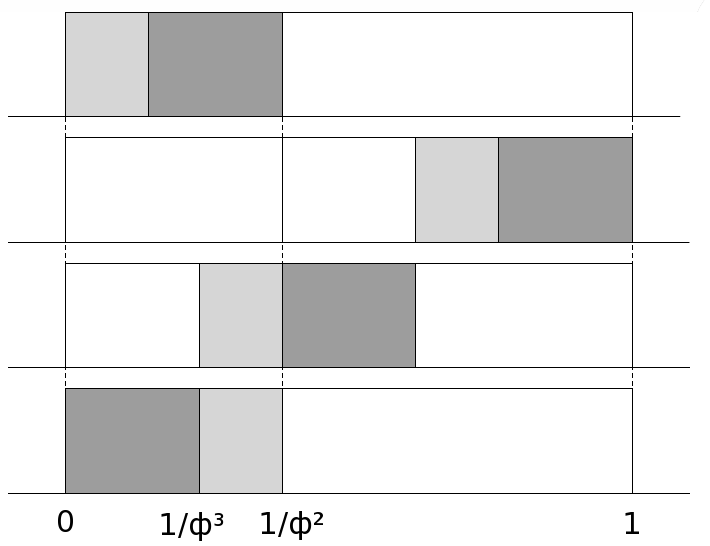}
\caption{Calculation of  $n_y$.}
\label{nil:fig:inductionor}
\end{figure}
\noindent
A direct calculation then yields the expression of $T_{\mbox{\footnotesize Ind}} ^s$:
$$
\begin{array}{llll}
T_{\mbox{\footnotesize Ind}} ^s(y,z) & = & \left(  y +\frac{1}{\phi^3},  z - y \left( \frac{1}{\phi} + \phi \right)  +2\theta + (s+1)\left( \frac{1}{\phi} + \phi \right) \mbox{ mod }1\right) \mbox{ if $0 \leq y \leq \frac{1}{\phi^4}$ },  \\
T_{\mbox{\footnotesize Ind}} ^s (y,z) & = & \left( y- \frac{1}{\phi^4},  z - y \left( \frac{2}{\phi} +  \phi \right) + 3 \theta - \frac{1}{\phi^4}  + (s+1)\left( \frac{2}{\phi} + \phi \right)\mbox{ mod }1 \right) \mbox{ if $ \frac{1}{\phi^4} \leq y \leq \frac{1}{\phi^2}$ .}
\end{array}
$$
We consider the application $\Phi$ from $\mathcal S _{\mbox{\footnotesize Ind}}$ into $\mathcal S$:
$$
\Phi  (y,z) = (\phi^2 y , a y^2 + by +z \mbox{ mod } 1) \mbox{ and } {\Phi} ^{-1} (y,z) = (\phi^{-2} y , z - \frac{a}{\phi^4} y^2 - \frac{b}{\phi^2} y \mbox{ mod } 1 ).
$$
The application $\underline{T}^s = \Phi \circ T_{\mbox{\footnotesize Ind}}  \circ \Phi ^{-1} $ of $\mathcal S $ into itself,  obtained by the transfer function $\Phi$ is:
\vspace{6pt}
\\
\underline{If $ 0 \leq y \leq \frac{1}{\phi^2}$}:
$$
\begin{array}{llll}
\underline{T}^s (y,z) & = & \Phi \circ T_{\mbox{\footnotesize Ind}} ^s (\phi^{-2} y , z - \frac{a}{\phi^4} y^2 - \frac{b}{\phi^2} y  \mbox{ mod }1)
\\
& =& \Phi \circ \mbox{\Large{(}}  \phi^{-2} y  - \frac{1}{\phi^4} + \frac{1}{\phi^2}, z - \frac{a}{\phi^4} y^2 - \frac{b}{\phi^2} y 
- \phi^{-2} y  \left( \frac{1}{\phi} + \phi \right)  
 + 2 \theta + (s+1)\left( \frac{1}{\phi} + \phi \right) \mbox{ mod }1  \mbox{\Large{)}}  
\\
& =&  \mbox{\Large{(}}  y  - \frac{1}{\phi^2} + 1, z - \frac{a}{\phi^4} y^2 - \frac{b}{\phi^2} y 
- \phi^{-2} y  \left( \frac{1}{\phi} + \phi \right) + 2 \theta  + (s+1)\left( \frac{1}{\phi} + \phi \right)
 \\
& & +a (  \phi^{-2} y  - \frac{1}{\phi^4} + \frac{1}{\phi^2}  )^2+ b  (\phi^{-2} y  - \frac{1}{\phi^4} + \frac{1}{\phi^2})
 \mbox{ mod }1 \mbox{\Large{)}} 
 \\
 & =&  \left( y  - \frac{1}{\phi^2} + 1, z 
- \phi^{-2} y  \left( \frac{1}{\phi} + \phi -\frac{2a}{\phi^3} \right) 
+ \frac{b}{\phi^3}  
 + \frac{a}{\phi^6} + 2 \theta  + (s+1)\left( \frac{1}{\phi} + \phi \right)
 \mbox{ mod }1 \right),
\end{array}
$$
\underline{and if $\frac{1}{\phi^2}\leq y \leq 1$}:
$$
\begin{array}{llll}
\underline{T}^s (y,z) & = &
\Phi \circ T_{\mbox{\footnotesize Ind}} ^s (\phi^{-2} y , z - \frac{a}{\phi^4} y^2 - \frac{b}{\phi^2} y  \mbox{ mod }1)
\\
& =& \Phi \circ  \mbox{\Large{(}}  \phi^{-2} y  - \frac{1}{\phi^4}, z - \frac{a}{\phi^4} y^2 - \frac{b}{\phi^2} y 
- \phi^{-2} y  \left( \frac{2}{\phi} + \phi \right) 
+ 3 \theta- \frac{1}{\phi^4} +  (s+1)\left( \frac{2}{\phi} + \phi \right)  \mbox{ mod }1 \mbox{\Large{)}}
 \\
& =&   \mbox{\Large{(}}  y  - \frac{1}{\phi^2}, z - \frac{a}{\phi^4} y^2 - \frac{b}{\phi^2} y 
- \phi^{-2} y  \left( \frac{2}{\phi} + \phi \right) + 3 \theta-
\frac{1}{\phi^4} +  (s+1)\left( \frac{2}{\phi} + \phi \right)\\
& & +a (   \phi^{-2} y  - \frac{1}{\phi^4} )^2+ b  (  \phi^{-2} y  - \frac{1}{\phi^4} )
 \mbox{ mod }1  \mbox{\Large{)}} \\
 & = & \left(  y  - \frac{1}{\phi^2}, z
- \phi^{-2} y  \left( \frac{2}{\phi} + \phi + \frac{2a}{\phi^4} \right) +
3 \theta - \frac{1}{\phi^4} + \frac{a}{\phi^8}-\frac{b}{\phi^4} +  (s+1)\left( \frac{2}{\phi} + \phi \right)
 \mbox{ mod }1 \right) .
\end{array}
 $$ 
To get the desired result, we must find $ \theta '$  such that the function $ \underline{T} ^s $  belongs to the family of initial functions. 
Therefore, select parameters  $a$ and $ b $ such that we can find a $\theta '$  such that the following two systems admit a solution: 
$$
\mbox{ (S$_1$) : }
\left\{ \begin{array}{cccc}
-\phi & = & - \phi^{-2}  \left( \frac{1}{\phi} + \phi -\frac{2a}{\phi^3} \right) ,\\
-\frac{1}{\phi} & = &- \phi^{-2}   \left( \frac{2}{\phi} + \phi + \frac{2a}{\phi^4} \right) ,\\
\end{array} \right.
$$
$$
\mbox{ and (S$_2$) : }
\left\{  \begin{array}{cccc} 
\theta' -\frac{1}{\phi} + (s'+1)\phi &= &  \frac{b}{\phi^3}   + \frac{a}{\phi^6} + 2 \theta + (s+1)\left( \frac{1}{\phi} + \phi \right) ,\\
\theta ' + (s'+1)/\phi & = & 3 \theta - \frac{1}{\phi^4} + \frac{a}{\phi^8}-\frac{b}{\phi^4}+  (s+1)\left( \frac{2}{\phi} + \phi \right) .\\
 \end{array} \right. 
$$
The first system has a unique solution : $ a = -\phi^3. $
\vspace{6pt}
\\
The parameters $ b $ and $ s '$ are related by: $\frac{1}{\phi} - (s'+1) = \theta -\frac{1}{\phi^2} b + (s+1) \frac{1}{\phi}$, which fixes the value of parameters:
$$
b = \phi^2 \theta + \phi(s+1) + \phi^2 (s'+1) -\phi.
$$
So we  find: 
$\theta'   =   \frac{b}{\phi^3}   + \frac{a}{\phi^6} + 2 \theta + (s+1)\left( \frac{1}{\phi} + \phi \right) + \frac{1}{\phi} - (s'+1)\phi
= \phi^2 \theta + \phi^2 (s+1) - (s'+1).$
\end{proof}

\section{Proof of Theorem \ref{th7}}
\label{nil:se:flot}

\noindent
We start by fix a nilflow periodic under renormalization and its associate automorphisms $\mathcal L$.
By Proposition \ref{nil:prop:autosubs}, there exists $\sigma$, an automorphism on $\mathbb F_2$, defined in (\ref{nil:eq:substitution}), such that $\mathcal L = \mathfrak S_\sigma$.
The periodic points of the renormalization flow of L. Flaminio and G. Forni, are semi-simple hyperbolic automorphisms
which stabilize the discrete Heisenberg group $\Gamma$, and preserve the center, up to a change of orientation.
So, $\sigma$ is a \textbf{hyperbolic}, \textbf{unimodular} automorphism,
that is to say that we impose on $M_\sigma$ hypothesis $(H)$:

$$
\mbox{$M_\sigma= \begin{pmatrix} A & B \\ C & D \end{pmatrix}$
admits two reals eigenvalues $\lambda$ and $\lambda'$ such that $\mid \lambda \lambda ' \mid = 1$
and $\mid \lambda \mid > \mid \lambda' \mid$.}
\ \ \ \ \ \ \ \ 
(\boldsymbol{H})
$$
Let $(\alpha, \beta)$ be a nontrivial eigenvector 
associated to the eigenvalue $\lambda$.
We can interchange ``$a$" with ``$a^{-1}$", or ``$b$" with ``$b^{-1}$", so that we can choose 
$\alpha\geq 0$ and $\beta \geq 0$, such that $\alpha +\beta =1$.
In this way, if $v$ is an infinite word on $\{a,b\}$
such that $\sigma(v)=v$, then $\alpha$  corresponds exactly to the frequency of occurrence of 
``$a$'' in $v$, and $\beta$ to the frequency of occurrence of symbol ``$b$''.
\vspace{6pt}
\\
We fix $(\alpha',\beta')$,  a nontrivial eigenvector 
associated to the eigenvalue $\lambda'$.
Recall that under these conditions, we have: 
$\alpha' \beta' < 0$, $\lambda \notin \mathbb Q$ and $\lambda' \notin \mathbb Q$.
To simplify the calculations, we impose that ${\alpha'}^2+{\beta'}^2=1$.
\vspace{6pt}
\\
According to hypothesis $(\textbf{H})$, the values $\alpha'$ and $\beta'$
are nonzero. We fix $\alpha'$ strictly negative. 
We put $\Delta = \alpha \beta ' -\alpha' \beta \neq 0$
and we write $\Phi _\lambda $ and  $\Phi _{\lambda'} $ the flows
obtained by Proposition \ref{nil:prop:flotcomm}.
We write $\gamma$ and $\gamma'$ the reals given by equation (\ref{nil:eq:gamma}).
We note that $\Delta$ is negative because 
$\beta' \Delta = (\beta ')^2 \alpha + (-\beta')\alpha' \beta >0.$
\vspace{6pt}
\\
The flows $\Phi _\lambda $ and  $\Phi _{\lambda'} $
generate a surface
$
S = 
\left\{ \Phi _\lambda ^t \circ \Phi_{ \lambda '} ^s( \boldsymbol{0} ); (t,s)\in \mathbb R ^2 \right\}
\mbox{ and we write $x_{t,s} =  \Phi _\lambda ^t \circ \Phi_{\lambda '} ^s( \boldsymbol{0} )$}.
$
\vspace{6pt}
\\
Let $t_a$ and $t_b$ be the reals defined by 
$
\left\{ \begin{array}{cccc}  (t_a \alpha -1) \beta' & = & t_a \beta \alpha' , \\  t_b \alpha \beta ' & = & (t_b \beta-1) \alpha' , \end{array} \right.
\Longleftrightarrow
\left\{ \begin{array}{cccccc} t_a  & = & \frac{\beta' }{  \beta' \alpha  - \alpha ' \beta} & = &  \frac{\beta' }{ \Delta} >0 ,\\  t_b  & = & \frac{-\alpha' }{  \beta' \alpha  - \alpha' \beta} & = & \frac{ - \alpha' }{ \Delta}>0. \\ \end{array} \right. 
$
\vspace{6pt}
\\
We put 
$ d_a   =  \sqrt{ (t_a \alpha-1)^2 + (t_a \beta)^2 }$ and
$  d_b   =  \sqrt{ (t_b \alpha)^2 + (t_b \beta-1)^2 }$.
We write $\mathcal D = \mathcal D_a \cup \mathcal D_b$, where:
$$
\mathcal D _a   =  \left\{ (\alpha ' s+t \alpha,  \beta' s + t \beta);
-d_a  \leq s < 0 \mbox{ and } 0 \leq t  < t_b \right\} \mbox{ and }
 \mathcal D _b   =  \left\{ (\alpha ' s+t \alpha,  \beta' s + t \beta);
  0 \leq s < d_b \mbox{ and } 0 \leq t  < t_a \right\}.
  $$
\begin{figure}[H]
\centering
\includegraphics[width=6.5cm]{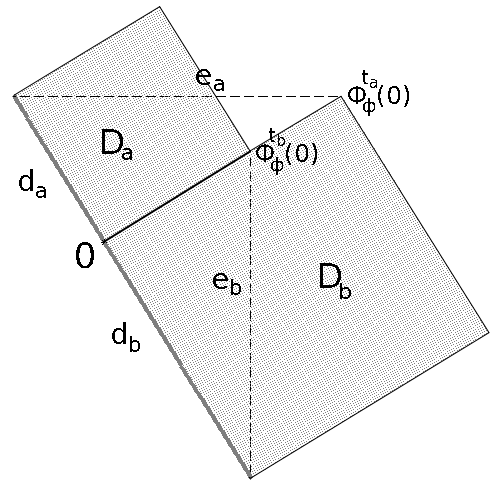}
\caption{Domain $D_\tau$ associated to the substitution of Fibonacci.}
\label{nil:fig:domaineor}
\end{figure}
\begin{proposition}
 \label{nil:prop:eqsurf}
There exists a polynomial $ Q_\sigma $ of degree $ 2 $ in $ x $ and $ y $ 
such that $\boldsymbol{x} = [x,y,z] \in S$ if and only if $z=Q_\sigma(x,y)$.
 \end{proposition}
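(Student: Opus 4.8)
The plan is to write down the parametrization $x_{t,s}$ explicitly and then observe that the projection $\boldsymbol p$ identifies the surface $S$ with a graph over $\mathbb R^2$. First I would evaluate $\Phi_{\lambda'}^s(\boldsymbol 0) = \left[ \alpha' s, \beta' s, \gamma' s + \tfrac{\alpha'\beta'}{2}s^2 \right]$ using (\ref{nil:eq:flot}), and then apply $\Phi_\lambda^t$ through the group law $\bullet$. Since the third coordinate of $\boldsymbol{g}_\lambda^t \bullet \boldsymbol{x}$ acquires the cross term $(\alpha t)(\beta' s)$, this gives
$$
x_{t,s} = \begin{bmatrix} \alpha t + \alpha' s \\ \beta t + \beta' s \\ \gamma t + \gamma' s + \tfrac{\alpha\beta}{2}t^2 + \tfrac{\alpha'\beta'}{2}s^2 + \alpha\beta' \, t s \end{bmatrix}.
$$
Thus the first two coordinates $(x,y)$ depend linearly on $(t,s)$, while the third is a quadratic polynomial in $(t,s)$.

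The key observation is that the linear map $(t,s)\mapsto (x,y)$ has matrix $\left(\begin{smallmatrix} \alpha & \alpha' \\ \beta & \beta' \end{smallmatrix}\right)$, whose determinant is exactly $\Delta = \alpha\beta' - \alpha'\beta$, nonzero by hypothesis. Hence $\boldsymbol p$ restricts to a linear bijection of the parameter plane onto $\mathbb R^2$, and I can solve
$$
t = \frac{\beta' x - \alpha' y}{\Delta}, \qquad s = \frac{-\beta x + \alpha y}{\Delta},
$$
expressing $(t,s)$ as linear forms in $(x,y)$. Substituting these into the third coordinate defines the polynomial $Q_\sigma(x,y)$.

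To conclude the equivalence I would argue both implications from this bijection. If $\boldsymbol{x} = [x,y,z] \in S$, then $[x,y,z] = x_{t,s}$ for the unique $(t,s)$ determined above, so $z = Q_\sigma(x,y)$. Conversely, given any $(x,y)$ and setting $z = Q_\sigma(x,y)$, the same $(t,s)$ produces $x_{t,s} = [x,y,z]$, so the point lies on $S$. Finally, to check that $Q_\sigma$ has degree exactly $2$, I would note that the homogeneous quadratic part in $(t,s)$ has $s^2$-coefficient $\tfrac{\alpha'\beta'}{2}\neq 0$ (recall $\alpha'\beta' < 0$); since composing with an invertible linear change of variables preserves the rank of the quadratic part, the resulting polynomial in $(x,y)$ is of degree $2$.

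There is no serious obstacle here: the argument is a direct computation resting on the noncommutative group law, and the only point requiring care is confirming that $\Delta \neq 0$ — already recorded before the statement — which simultaneously guarantees surjectivity onto the $(x,y)$-plane (the backward implication) and the single-valuedness of $z$ over each $(x,y)$ (the forward implication). As a consistency check I would exhibit the explicit $Q_\sigma$ for the Fibonacci example, since both relevant flows were already computed in (\ref{nil:eq:fflot}).
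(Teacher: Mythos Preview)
Your argument is correct and complete. The paper actually states Proposition~\ref{nil:prop:eqsurf} without proof, treating it as an immediate consequence of the explicit parametrization of $S$; your write-up supplies exactly the direct computation the authors evidently had in mind, including the verification (via $\alpha'\beta'\neq 0$ and $\Delta\neq 0$) that the degree is genuinely $2$.
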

\noindent
By Proposition \ref{nil:prop:flotcomm}, $\mathfrak S$ acts on $S$ by: 
$\mathfrak S (x_{t,s}) = \mathfrak S \circ \Phi _\lambda ^t \circ \Phi_{ \lambda'} ^s( \boldsymbol{0} )
= \Phi _\lambda ^{\lambda t} \circ \Phi_{ \lambda '} ^{ \lambda '}( \boldsymbol{0} )
= x_{\lambda t, s  \lambda '}.$
\vspace{6pt}
\\
We consider the ``\textbf{tile}'':
$
\mathcal T = \left\{ 
\begin{bmatrix} x  \\ y \\ z  \end{bmatrix} \in \boldsymbol{X} ; (x,y) \in \mathcal D \mbox{ and } 
Q_\sigma(x,y) -1/2 \leq z <  Q_\sigma(x,y)+1/2
\right\}. 
$
\begin{proposition}
\label{nil:prop:dfflot}
$\mathcal T$ is a \textbf{fundamental domain} of $ \boldsymbol{X}$.
\end{proposition}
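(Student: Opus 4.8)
The plan is to show that the quotient map $\boldsymbol{X}\to\underline{\boldsymbol{X}}$ restricts to a bijection on $\mathcal{T}$, i.e. that every right coset $\boldsymbol{x}\bullet\boldsymbol{\Gamma}$ meets $\mathcal{T}$ in exactly one point. The group law gives, for $\boldsymbol{\gamma}=[p,q,r]$ with $(p,q,r)\in\mathbb{Z}^3$, the right action $[x,y,z]\bullet[p,q,r]=[x+p,\,y+q,\,z+r+xq]$. Thus right multiplication moves the projection $\boldsymbol{p}(\boldsymbol{x})=(x,y)$ by the integer vector $(p,q)$, while on a central fibre (fixed $(x,y)$) it acts by $z\mapsto z+r$, $r\in\mathbb{Z}$. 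Using the two exact sequences $1\to\boldsymbol{Z}\to\boldsymbol{X}\xrightarrow{\boldsymbol{p}}\mathbb{R}^2\to 1$ and $1\to\boldsymbol{Z}\cap\boldsymbol{\Gamma}\to\boldsymbol{\Gamma}\to\mathbb{Z}^2\to 1$, I would treat the base $(x,y)$ and the fibre $z$ separately, so that the whole statement reduces to two facts: that $\mathcal{D}$ is a fundamental domain for the translation action of $\mathbb{Z}^2$ on $\mathbb{R}^2$, and that over each $(x,y)\in\mathcal{D}$ the window prescribed in the definition of $\mathcal{T}$ has length exactly $1$.

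First I would establish the key geometric fact that $\mathcal{D}=\mathcal{D}_a\cup\mathcal{D}_b$ is a fundamental domain for $\mathbb{Z}^2$. Writing each point of the plane as $t(\alpha,\beta)+s(\alpha',\beta')$ in the eigenbasis turns $\mathcal{D}_a$ and $\mathcal{D}_b$ into the axis-parallel rectangles $[0,t_b)\times[-d_a,0)$ and $[0,t_a)\times[0,d_b)$ of the $(t,s)$-plane. The defining relations of $t_a,t_b,d_a,d_b$ say exactly that $(1,0)$ and $(0,1)$ have eigenbasis coordinates $(t_a,\pm d_a)$ and $(t_b,\mp d_b)$, since $(t_a\alpha-1,t_a\beta)$ and $(t_b\alpha,t_b\beta-1)$ are parallel to $(\alpha',\beta')$ with lengths $d_a$ and $d_b$. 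Hence $\mathbb{Z}^2$ corresponds to the lattice $L$ generated by $(t_a,\pm d_a)$ and $(t_b,\mp d_b)$, whose covolume $t_ad_b+t_bd_a$ equals, by inspection, the total $(t,s)$-area of the two rectangles. Thus $\mathcal{D}$ already has the area of a single fundamental domain, and it remains only to check that the $L$-translates of $\mathcal{D}_a\sqcup\mathcal{D}_b$ partition the plane; I would do this by matching, under the two generators, the outer horizontal and vertical edges of the two rectangles, which together with the area count forces an exact tiling under the chosen half-open conventions. I expect this tiling (equivalently, injectivity modulo $\mathbb{Z}^2$) to be the main obstacle, since it is the only genuinely geometric point.

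Granting that $\mathcal{D}$ is a fundamental domain for $\mathbb{Z}^2$, the proposition follows by the fibre argument. For existence, given $\boldsymbol{x}=[x,y,z]$ choose the unique $(p,q)\in\mathbb{Z}^2$ with $(x+p,y+q)\in\mathcal{D}$; then $\boldsymbol{x}\bullet[p,q,0]=[x+p,y+q,z+xq]$ has its projection in $\mathcal{D}$, and since the half-open window $[\,Q_\sigma(x+p,y+q)-\tfrac12,\,Q_\sigma(x+p,y+q)+\tfrac12)$ has length $1$, there is a unique $r\in\mathbb{Z}$ placing $z+xq+r$ inside it, so $\boldsymbol{x}\bullet[p,q,r]\in\mathcal{T}$. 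For uniqueness, suppose $\boldsymbol{x},\boldsymbol{x}'\in\mathcal{T}$ satisfy $\boldsymbol{x}'=\boldsymbol{x}\bullet[p,q,r]$. Projecting gives $(x',y')=(x+p,y+q)$ with both points in $\mathcal{D}$, so uniqueness for $\mathbb{Z}^2$ forces $(p,q)=(0,0)$; then $x'=x$, $y'=y$ and $z'=z+r$ with $z,z'$ in the same length-$1$ half-open window, whence $r=0$ and $\boldsymbol{x}=\boldsymbol{x}'$. Each coset therefore meets $\mathcal{T}$ exactly once, so $\mathcal{T}$ is a fundamental domain of $\boldsymbol{X}$. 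Here Proposition \ref{nil:prop:eqsurf} enters only to guarantee that $Q_\sigma$ is a genuine function of $(x,y)$, making the window unambiguous; any other continuous centering would serve equally, which is consistent with the measure computation $\lambda^3(\mathcal{T})=\mathrm{area}(\mathcal{D})=1$.
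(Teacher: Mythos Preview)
Your approach is correct and coincides with the paper's: the paper states that the proposition is a direct consequence of the fact that $\mathcal{D}$ is a fundamental domain for $\mathbb{Z}^2$ acting on $\mathbb{R}^2$, citing Adler for this two-rectangle tiling, and leaves the passage to $\boldsymbol{X}$ implicit; your fibre argument via the exact sequences is exactly the natural way to make that passage explicit. The only difference is that you sketch the Adler tiling argument yourself (area count plus edge matching) rather than invoking it as a known result.
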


\noindent
The proof of Proposition \ref{nil:prop:dfflot} is a direct conscequence of the fact that $D_\sigma$ is a fondamental domain for $\mathbb R^2$ (\cite{Adler}).
The aim is to consider the properties of the first return flow in a  "good" section. This section will be the surface $\Sigma$ defined below.
Proposition \ref{nil:prop:autoind} ensures us that this application is  self-induced. Then we will see in Proposition \ref{nil:th:niltran},
 that this application is conjugated to a niltranslation, which will have property also 
to be self-induced. Proposition \ref{nil:prop:alafin} assures us that this
niltranslation is minimal and uniquely ergodic on 
a surface isomorphic to the torus ${(\mathbb S^1)}^2$.
It will complete the proof of Theorem \ref{th7}.

\begin{figure}[H]
\centering
\includegraphics[width=7cm]{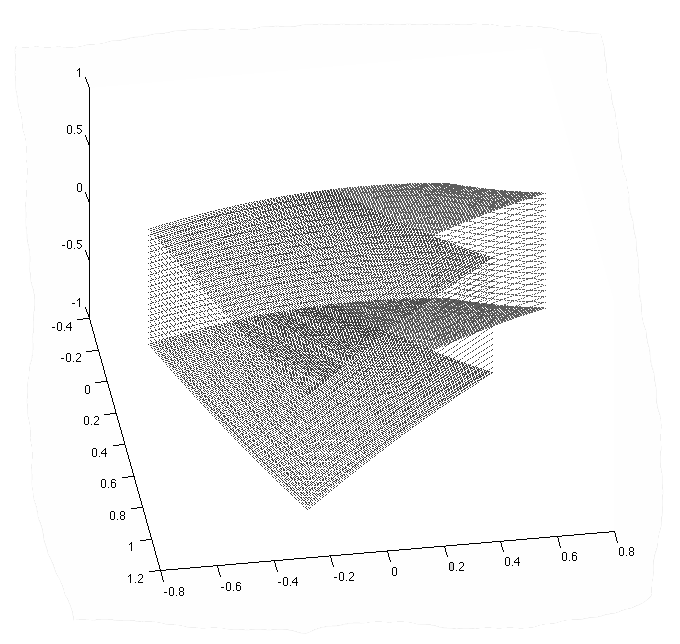}
\includegraphics[width=7cm]{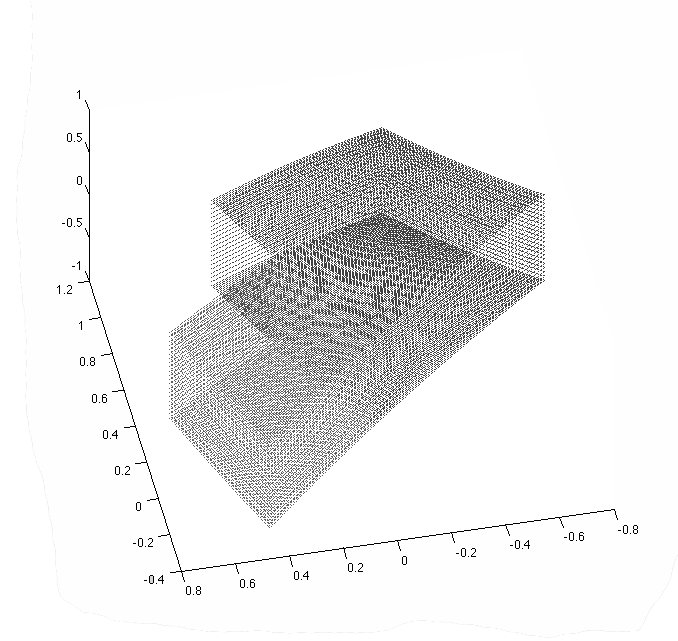}
\caption{Two angles of the tile associated to the Fibonacci substitution.}
\label{nil:fig:tuileor}
\end{figure}
\noindent
Consider the following section: 
$
\Sigma = \left\{ 
\begin{bmatrix} \alpha ' s   \\  \beta' s \\ z  \end{bmatrix} ; s  \in 
[- d_a,d_b [ \mbox{ and } 
Q_\sigma( \alpha ' s  ,  \beta' s ) -\frac{1}{2} \leq z <  Q_\sigma( \alpha ' s  ,  \beta' s )+\frac{1}{2}
\right\} .
$
\vspace{6pt}
\\
We denote by $T_{\Sigma}$ the application of first return of flow $\Phi_\lambda$ from the section $\Sigma$ into itself. 
\begin{proposition}
\label{nil:prop:autoind}
The application $T_{\Sigma}$ is self-induced. 
\end{proposition}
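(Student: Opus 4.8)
The plan is to use the automorphism $\mathfrak S = \mathfrak S_\sigma$ itself as the renormalization map $\widetilde\Phi$, and to exploit the intertwining relation recorded in Proposition \ref{nil:prop:flotcomm}, namely $\mathfrak S \circ \Phi_\lambda^t = \Phi_\lambda^{\lambda t} \circ \mathfrak S$. Since $\mathfrak S$ is an automorphism of $\boldsymbol X$ preserving the lattice $\boldsymbol\Gamma$ and the center, it descends to a homeomorphism of the nilmanifold $\underline{\boldsymbol X}$; because it multiplies the central generator $\boldsymbol n$ by $\det(M_\sigma)=\pm 1$, it is compatible with the reduction of the $z$-coordinate modulo $1$ used to define the tile $\mathcal T$ and the section $\Sigma$. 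The preliminary observation is that $\mathfrak S$ maps the $\Phi_\lambda$-orbit through a point $p$ bijectively onto the $\Phi_\lambda$-orbit through $\mathfrak S(p)$, merely rescaling the time parameter by the factor $\lambda$. Consequently, for any subsection $\Sigma'\subseteq\Sigma$, the crossings of the orbit of $\mathfrak S(p)$ with $\mathfrak S(\Sigma')$ correspond, under $t\mapsto \lambda t$, exactly to the crossings of the orbit of $p$ with $\Sigma'$.

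First I would establish the key geometric inclusion $\mathfrak S(\Sigma)\subseteq\Sigma$. By Proposition \ref{nil:prop:flotcomm} one has $\mathfrak S(x_{t,s}) = x_{\lambda t,\lambda' s}$, so $\mathfrak S$ preserves (as a set) the $\Phi_{\lambda'}$-orbit of the origin, which is the backbone $t=0$ of $\Sigma$, and contracts its parameter $s$ by the factor $\lambda'$ with $|\lambda'|=1/|\lambda|<1$. By Proposition \ref{nil:prop:eqsurf} the surface $S$ is the graph $z=Q_\sigma(x,y)$ and is preserved by $\mathfrak S$; thus $\mathfrak S$ sends the central fiber of $\Sigma$ over a parameter $s$ to the central fiber over $\lambda' s$, and, since $\det(M_\sigma)=\pm 1$, respects the length-one $z$-interval modulo $\boldsymbol Z\cap\boldsymbol\Gamma$. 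After applying $\boldsymbol p$, the inclusion therefore reduces to the statement that the linear map $M_\sigma$ sends the transverse segment $\{(\alpha' s,\beta' s);\, s\in[-d_a,d_b)\}$ of the fundamental domain $\mathcal D$ into itself. This is precisely the self-affinity of $\mathcal D$ as a Markov domain for $M_\sigma$ furnished by \cite{Adler}, together with the choice of $d_a,d_b,t_a,t_b$ fixed above. I expect this bookkeeping, matching the contracted range $\lambda'\cdot[-d_a,d_b)$ inside $[-d_a,d_b)$ with due attention to the sign of $\lambda'$, to be the only delicate point of the argument.

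Granting the inclusion, I would conclude by the standard tower identity for induced maps. Write $T_\Sigma(p)=\Phi_\lambda^{\tau(p)}(p)$ for the first-return map and set $\Sigma'=\mathfrak S(\Sigma)$. The orbit correspondence of the first paragraph shows that the first return of $\Phi_\lambda$ to $\Sigma'$ is $T_{\Sigma'}=\mathfrak S\circ T_\Sigma\circ\mathfrak S^{-1}$: indeed $\mathfrak S(T_\Sigma(p))=\Phi_\lambda^{\lambda\tau(p)}(\mathfrak S(p))$, and $\lambda\tau(p)$ is the first positive time at which the orbit of $\mathfrak S(p)$ meets $\Sigma'$, because its crossings with $\Sigma'$ are exactly the $\lambda$-images of the crossings of the orbit of $p$ with $\Sigma$. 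On the other hand, since $\Sigma'\subseteq\Sigma$, the first-return map $T_{\Sigma'}$ of the flow to $\Sigma'$ coincides with the transformation induced by $T_\Sigma$ on $\Sigma'$, as each intermediate return of the flow to $\Sigma$ is one application of $T_\Sigma$ and one iterates $T_\Sigma$ until the orbit first lands back in $\Sigma'$. Combining the two identities gives that the map induced by $T_\Sigma$ on $\Sigma'=\mathfrak S(\Sigma)$ is conjugate, through $\widetilde\Phi=\mathfrak S|_\Sigma$, to $T_\Sigma$ itself, which is exactly the assertion that $T_\Sigma$ is self-induced.
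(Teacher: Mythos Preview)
Your approach is essentially the same as the paper's: use $\mathfrak S_\sigma$ as the renormalization, invoke the intertwining relation $\mathfrak S\circ\Phi_\lambda^t=\Phi_\lambda^{\lambda t}\circ\mathfrak S$ from Proposition~\ref{nil:prop:flotcomm}, and conclude that the first-return map to $\mathfrak S(\Sigma)$ is $\mathfrak S\circ T_\Sigma\circ\mathfrak S^{-1}$. Your write-up is in fact more explicit than the paper's, since you spell out the inclusion $\mathfrak S(\Sigma)\subseteq\Sigma$ and the tower identity that turn this conjugacy into genuine self-induction, points the paper leaves implicit.
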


\begin{proof}
Let $\boldsymbol{x}$ be a point of  $\mathfrak S(\Sigma)$ and
$t_{\boldsymbol{x}} = \inf  \left\{ t >0; \Phi_\lambda ^t (\boldsymbol{x}) \in \mathfrak S(\Sigma) \right\} \in [0,+ \infty]$.
\vspace{6pt}
\\
We denote by $T_{\mathfrak S(\Sigma)}$ the application of $\mathfrak S(\Sigma)$ into itself defined by $T_{\mathfrak S(\Sigma)} (\boldsymbol{x})  = \Phi_\lambda ^{t_{\boldsymbol{x}} } (\boldsymbol{x}).$
According to the sign of $\det M_\sigma$,  two situations may occur. In all cases,  we have
$t_{\boldsymbol{x}}  \in \{  \min(\lambda t_a,\lambda t_b) ,\max(\lambda t_a,\lambda t_b) \}$.
Assume in the remainder of the proof, that  $\det(M_\sigma)>0$.
\vspace{6pt}
\\
Let $\boldsymbol{x} =\begin{bmatrix} x \\ y \\ z \end{bmatrix}$,
by Proposition \ref{nil:prop:flotcomm}:
$$
\left\{
\begin{array}{llll}
\mbox{ if $x>0$} , \
\mathfrak S^{-1} \circ T_{\mathfrak S(\Sigma)} \circ \mathfrak S (\boldsymbol{x}) =
\mathfrak S^{-1} \circ \Phi_\lambda ^{\lambda t_{\boldsymbol{a}}} \circ \mathfrak S (\boldsymbol{x})=
 \Phi_\lambda ^{\frac{1}{\lambda} \lambda t_{\boldsymbol{a}}} (\boldsymbol{x})
 =
 \Phi_\lambda ^{ t_{\boldsymbol{a}} } (\boldsymbol{x}) = T_\Sigma (\boldsymbol{x} ),
\\
\mbox{ if $x<0$} , \
\mathfrak S^{-1} \circ T_{\mathfrak S(\Sigma)} \circ \mathfrak S (\boldsymbol{x})  = 
\mathfrak S^{-1} \circ \Phi_\lambda ^{\lambda t_{\boldsymbol{b}}} \circ \mathfrak S (\boldsymbol{x})=
 \Phi_\lambda ^{\frac{1}{\lambda} \lambda t_{\boldsymbol{b}}} (\boldsymbol{x})
 = T_\Sigma (\boldsymbol{x} ).
\end{array}
\right.
$$
\end{proof}
\noindent
We associate to $\sigma$,
$\boldsymbol{\mathfrak{x}}_\sigma =  \begin{pmatrix} \alpha \\ \beta \\ \gamma  \end{pmatrix} \in \boldsymbol{\mathfrak{g}}$,
the matrix $\boldsymbol{g}_\sigma = \exp \boldsymbol{\mathfrak{x}}_\sigma \in \boldsymbol{X}$
and the niltranslation 
$T_\sigma : 
\begin{array}{cccc}
\boldsymbol{X} & \longrightarrow & \boldsymbol{X} \\
\boldsymbol{x} & \mapsto &  \boldsymbol{g}_\sigma \bullet \boldsymbol{x}
\end{array}.
$
\vspace{6pt}
\\
We consider the surfaces: 
$ \mathcal D = \left\{ \begin{bmatrix}x \\ y \\ z \end{bmatrix} ; x+y \in \mathbb Z \right\} \mbox{ and } D= \left\{ \begin{bmatrix}x \\ y \\ z \end{bmatrix} ; x+y = 0 \right\}$.

\begin{proposition}
\label{nil:th:niltran} 
The surface $\mathcal D $ immersed in the quotient space, denoted $\underline{ \mathcal D}$,
is a section of the flow $\Phi_\lambda$ with a return time constant, equal to $1$.
The application of first 
return flow in this section coincides with the niltranslation $\underline{T_\sigma}$
on $\underline{\mathcal D}$ .
In addition, the application $T_\Sigma$ on $\Sigma$ is measurably conjugate to the 
niltranslation $\underline{T_\sigma}$ on $\underline{\mathcal D}$
which is also self-induced. 
\end{proposition}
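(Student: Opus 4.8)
The plan is to handle the three assertions in order: the section property and the identification of the return map by a direct computation in coordinates, and the conjugacy with $\underline{T_\sigma}$ (hence its self-induction) by transporting Proposition \ref{nil:prop:autoind} through an explicit affine conjugacy.

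First I would establish that $\underline{\mathcal D}$ is a section of $\Phi_\lambda$ with constant return time $1$. The decisive point is the normalization $\alpha+\beta=1$. Applying the group law to the flow (\ref{nil:eq:flot}) gives $\Phi_\lambda^t[x,y,z]=[\,x+\alpha t,\ y+\beta t,\ \ast\,]$, so the function $(x,y)\mapsto x+y$ increases at \emph{unit speed} along every orbit of $\Phi_\lambda$. Since $\mathcal D=\{x+y\in\mathbb Z\}$, a trajectory issued from a point with $x+y=n$ meets $\mathcal D$ again exactly at integer times, the next crossing being at $t=1$; in the quotient $\underline{\boldsymbol X}$ all the sheets $\{x+y=n\}$ coincide, so the return time is the constant $1$. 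For the second assertion I would only observe that the time-$1$ map of $\Phi_\lambda$ is left multiplication by $\boldsymbol g_\lambda^1=\exp(\boldsymbol{\mathfrak x}_\sigma)=\boldsymbol g_\sigma$, which is exactly $T_\sigma$; passing to $\underline{\boldsymbol X}$ identifies the first return map with $\underline{T_\sigma}$.

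For the conjugacy I would put both systems into skew-product form over $\mathbb T^2$. Using the representatives $[-w,w,z]$ for $\{x+y=0\}$ and reducing $\boldsymbol g_\sigma\bullet[-w,w,z]$ back to this sheet by the lattice element $\boldsymbol n_b^{-1}$ (as in the face identifications of $\underline{\boldsymbol X}$), a direct calculation presents $\underline{T_\sigma}$ as an affine skew product $(w,z)\mapsto(w-\alpha,\ z+\psi(w))$ with $\psi$ piecewise affine, in complete analogy with the map obtained for the Fibonacci case in Section \ref{nil:se:ex}. The section $\Sigma$ is likewise a two-torus, being the $t=0$ face of the fundamental domain $\mathcal T$ of Proposition \ref{nil:prop:dfflot}, and $T_\Sigma$ is again an affine skew product over a rotation of the $s$-circle. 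The candidate conjugacy is the flow-sliding map $h\colon[\alpha's,\beta's,z]\mapsto\Phi_\lambda^{-(\alpha'+\beta')s}([\alpha's,\beta's,z])$, which lands on $\{x+y=0\}$, reparametrizes the base by the linear map $s\mapsto w=(\beta'-\beta(\alpha'+\beta'))s$, and shifts $z$ by an affine coboundary. I would then check that $h$ carries the rotation of $\Sigma$ to $w\mapsto w-\alpha$ and the two cocycles to cohomologous ones, so that $h\circ T_\Sigma=\underline{T_\sigma}\circ h$.

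Finally, self-induction is inherited: Proposition \ref{nil:prop:autoind} shows $T_\Sigma$ is self-induced, since inducing on the sub-section $\mathfrak S(\Sigma)\subset\Sigma$ returns, through $\mathfrak S$ and the relation $\mathfrak S\circ\Phi_\lambda^t\circ\mathfrak S^{-1}=\Phi_\lambda^{\lambda t}$, a map conjugate to $T_\Sigma$; conjugating by $h$ transports this property to $\underline{T_\sigma}$. \emph{The main obstacle is the verification of the previous paragraph}: showing that $h$ is a genuine measurable bijection of the two tori and that it intertwines the maps amounts to controlling how many times a $\Phi_\lambda$-orbit arc between consecutive returns to $\Sigma$ crosses $\underline{\mathcal D}$. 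Because the return times $t_a,t_b$ are not equal to $1$, this count is not constant, so the naive identification fails and one must check that the resulting cocycle discrepancy is an exact coboundary. This is precisely where the sign of $\det(M_\sigma)$ forces the two cases already isolated in the proof of Proposition \ref{nil:prop:autoind}, and it is the delicate step.
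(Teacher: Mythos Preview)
Your strategy is the paper's strategy: the conjugating map is exactly the flow-sliding map $h=\psi$, $\psi(\boldsymbol x)=\Phi_\lambda^{t^{\boldsymbol x}}(\boldsymbol x)$ with $t^{\boldsymbol x}=-(\alpha'+\beta')s$, sending $\Sigma$ into the sheet $D=\{x+y=0\}$. The first two paragraphs of your proposal (constant return time $1$ from $\alpha+\beta=1$, and identification of the time-$1$ map with $\underline{T_\sigma}$) are fine and match the paper.

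Where you diverge is in the diagnosis of the ``delicate step''. You anticipate that the number of $\underline{\mathcal D}$-crossings on a $\Sigma$-return arc is non-constant and plan to absorb the discrepancy as a coboundary on the skew product. The paper does \emph{not} do this, and in fact it is unnecessary: the whole point of the inequalities it checks is to show that the sliding time satisfies $|t^{\boldsymbol x}|<t_b$ (resp.\ $t_a$), so that every $\Sigma$-return arc meets $D$ exactly once. Once this is established, $\psi$ is a bijection onto $\psi(\Sigma)$ and the identity $\underline{T_\sigma}=\psi\circ T_\Sigma\circ\psi^{-1}$ is automatic --- two global sections whose return arcs interleave one-to-one always have conjugate return maps via the holonomy. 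So there is no cocycle discrepancy to kill, and the detour through skew-product form and cohomologous cocycles is superfluous.

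Two further points. First, the case split in the paper is on the sign of $\alpha'+\beta'$ (equivalently $\alpha'\le-\beta'$ versus $\alpha'>-\beta'$), which governs whether $t^{\boldsymbol x}$ is increasing or decreasing in $s$ and hence which endpoints to test; it is \emph{not} the $\det(M_\sigma)$ split from Proposition~\ref{nil:prop:autoind}. Second, the concrete content of the proof is just the verification of the elementary inequalities (the paper's (\ref{nil:eq:flotnil1})--(\ref{nil:eq:flotnil2})), e.g.\ $t^{\boldsymbol x_{\max}}<t_b$ reduces after simplification to $\alpha(\beta'+\alpha')>\beta'$, which follows from $\alpha'<0$, $\beta'>0$, $0<\alpha<1$. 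Replacing your coboundary plan by these direct estimates gives exactly the paper's proof.
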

\begin{proof}
The goal is to construct a bijection $ \psi $ bi-measurable between the sections 
 $\underline{ \mathcal D}$ and $\Sigma$.
\begin{figure}[H]
\centering
\includegraphics[width=7.5cm]{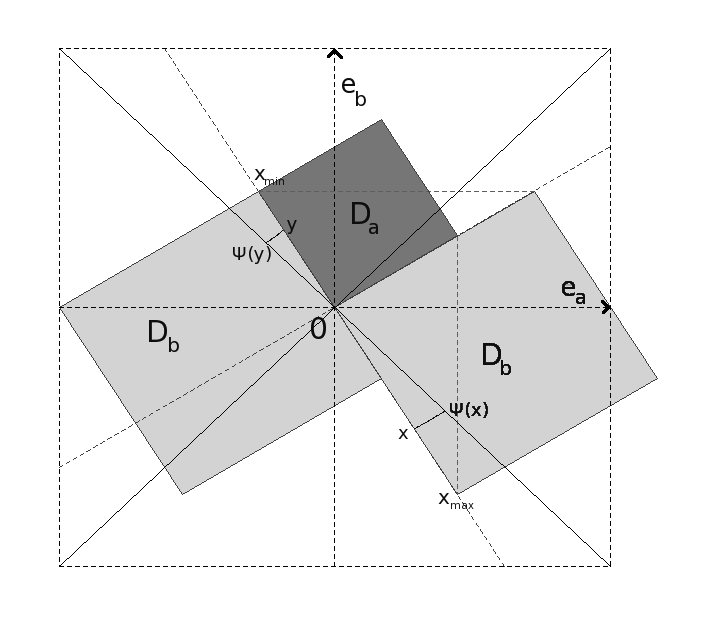}
\includegraphics[width=7.5cm]{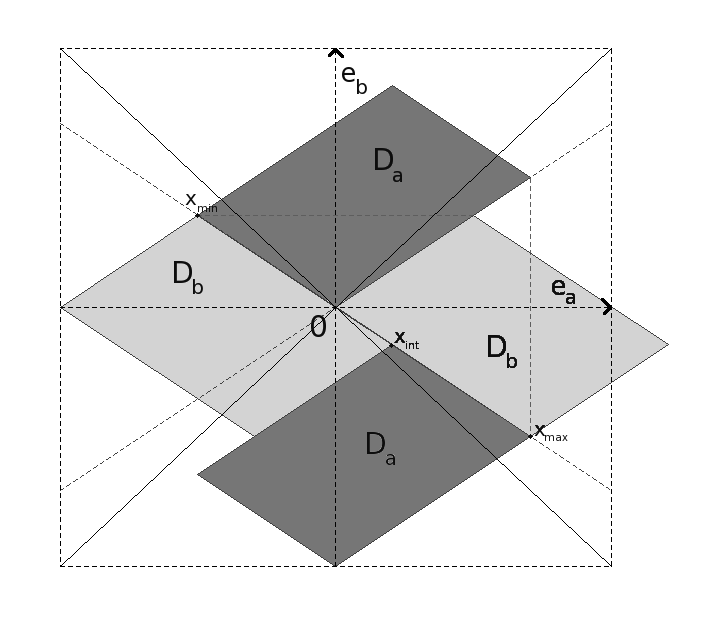}
\caption{Represation of the tiles in the case where  $\alpha'\leq -\beta'$, and $\alpha'>-\beta'$.}
\label{nil:fig:demo}
\end{figure}
\noindent
We treat only the case where $\alpha'\leq -\beta'$, the other case being analogous. We assume 
$\alpha' + \beta'<0$.
Let 
$$
\boldsymbol{x} = \begin{bmatrix} \alpha' s \\ \beta ' s \\ Q_\sigma(\alpha' s,\beta' s) \end{bmatrix} \in \Sigma
\mbox{ with } 
s \in \left[-d_a ,  d_b \right) .
$$
We define the time $t^{\boldsymbol{x}}$, so that $\Phi^{t^{\boldsymbol{x}}}_\lambda (\boldsymbol{x}) \in D$ satisfies :
$
\alpha' s + t^{\boldsymbol{x}} \alpha = - \left( \beta' s + t^{\boldsymbol{x}} \beta \right)
\mbox{ , so } t^{\boldsymbol{x}} = 
t^{\boldsymbol{x}} (\alpha + \beta)
=
-(\beta' + \alpha ')s.
$
\mbox{}
\\
The key of the demonstration is to verify that the flow $\Phi_\lambda$,
beginning at a point $ \boldsymbol{x} $, does not intersect the surface $ \Sigma $ before 
intersecting the diagonal surface $ D $. For this, 
\textit{we have just to check the following two conditions:} 
\begin{eqnarray}
\label{nil:eq:flotnil1}
\mbox{ If $\boldsymbol{x}_{\mbox{\footnotesize max}} $}
& = & 
\mbox{$ \begin{bmatrix} \alpha' d_b
 \\ \beta ' d_b \\ 0 \end{bmatrix}$,
then }
 t^{\boldsymbol{x}_{\mbox{\footnotesize max}}} < t_b,\\
\label{nil:eq:flotnil2}
\mbox{ and if $\boldsymbol{x}_{\mbox{\footnotesize min}} $}
&= &
\mbox{$
\begin{bmatrix} - \alpha ' d_a
 \\ -\beta ' d_a  \\ 0 \end{bmatrix}$,
then }
 - t^{\boldsymbol{x}_{\mbox{\footnotesize min}}} < t_b.
\end{eqnarray}
We begin by verifying equation (\ref{nil:eq:flotnil1}).
$$
\begin{array}{llll}
 t^{\boldsymbol{x}_{\mbox{\footnotesize max}}} < t_b
 & \Longleftrightarrow &  -(\beta'+ \alpha')d_b < t_b 
   \Longleftrightarrow   -\sqrt{(t_b \alpha)^2+(t_b\beta-1)^2} ( \beta'+ \alpha'  ) < t_b \\
   & \Longleftrightarrow &  -\sqrt{(\frac{-\alpha'}{\Delta} \alpha)^2+( \frac{-\alpha'}{\Delta}\beta-1)^2}   ( \beta'+ \alpha'  ) < \frac{\beta'}{\Delta} \\
   & \Longleftrightarrow &  -\mid \frac{1}{\Delta} \mid \sqrt{(\alpha' \alpha)^2+( -\alpha' \beta-\Delta)^2} ( \beta'+ \alpha'  )  < \frac{\beta'}{\Delta} \\
   & \Longleftrightarrow &   \sqrt{(\alpha' \alpha)^2+( \alpha \beta')^2}  ( \beta'+ \alpha'  )   > \beta' 
    \Longleftrightarrow    \alpha ( \beta'+ \alpha' ) > \beta' .
   \end{array}$$
\textit{Thus, it is sufficient to verify that:}
$
\beta' < \alpha \beta ' < \alpha \beta' + \alpha \alpha'
\mbox{\textit{ as $\alpha>0$, $\alpha'>0$ and $\beta'<0$.}}
$
\vspace{6pt}
\\
We now prove that equation (\ref{nil:eq:flotnil2}) is satisfied: 
$$
\begin{array}{llll}
- t^{\boldsymbol{x}_{\mbox{\footnotesize min}}} < t_b
 & \Longleftrightarrow &  -(\beta'+ \alpha') (- d_a)  < t_b 
   \Longleftrightarrow   \sqrt{(t_a \alpha -1)^2+(t_a \beta)^2} ( \beta'+ \alpha'  )< t_b \\
   & \Longleftrightarrow &  \sqrt{(\frac{\beta'}{\Delta} \alpha-1)^2+( \frac{\beta '}{\Delta}\beta)^2} ( \beta'+ \alpha'  )  < \frac{\beta'}{\Delta} \\
   & \Longleftrightarrow &  \mid \frac{1}{\Delta} \mid \sqrt{(\alpha\beta'-\Delta)^2+( \beta' \beta)^2}( \beta'+ \alpha'  )  < \frac{\beta'}{\Delta} \\
   & \Longleftrightarrow &   \sqrt{(\alpha' \beta)^2+( \beta \beta')^2} ( \beta'+ \alpha'  )  > \beta'     \Longleftrightarrow    \beta ( \beta'+ \alpha' ) > \beta' .
   \end{array}
$$
We conclude as before.  The case $ \alpha '+ \beta'> 0 $ is treated similarly.  We must verify the relations: 
$$
\mbox{ If } 
\boldsymbol{x}_{\mbox{\footnotesize min}}  = \begin{bmatrix} -\alpha ' d_a  \\ -\beta ' d_a  \\ 0 \end{bmatrix}
 \mbox{ , }
\boldsymbol{x}_{\mbox{\footnotesize int}}  =   \begin{bmatrix} \alpha' (d_b-d_a) \\ \beta ' (d_b-d_a) \\ 0 \end{bmatrix}
\mbox{ and } 
\boldsymbol{x}_{\mbox{\footnotesize max}}  =  \begin{bmatrix} \alpha' d_b \\ \beta ' d_b \\ 0 \end{bmatrix},
$$
then:
$t^{\boldsymbol{x}_{\mbox{\footnotesize min}}} < t_a$, $ - t^{\boldsymbol{x}_{\mbox{\footnotesize int}}} < t_a$ and $ -t^{\boldsymbol{x}_{\mbox{\footnotesize max}}} < t_b$.
The map $\psi : \Sigma \longrightarrow D$, defined by $\psi(\boldsymbol{x})= \Phi_\lambda ^{ t^{\boldsymbol{x}}}(\boldsymbol{x})$
is therefor a bijection between $\Sigma$ and $\psi(\Sigma)$. By construction, $\underline{T}_\sigma = \psi \circ {T}_\Sigma \circ \psi^{-1}$.
\end{proof}

\noindent
We thus arrive at the following result: 

\begin{proposition}
\label{nil:prop:alafin}
The niltranslation $\underline{T}_\sigma$ on $\underline{ \mathcal D}$ is self-induced, minimal and uniquely ergodic. 
\end{proposition}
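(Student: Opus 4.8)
The self-induction of $\underline{T}_\sigma$ has already been recorded in Proposition \ref{nil:th:niltran}, so the plan is to concentrate on minimality and unique ergodicity. The decisive observation is that, by the first part of Proposition \ref{nil:th:niltran}, the map $\underline{T}_\sigma$ on $\underline{\mathcal{D}}$ is nothing but the first-return map of the nilflow $\Phi_\lambda$ to the cross-section $\underline{\mathcal{D}}$, and that the associated return time is constant and equal to $1$. I would therefore reduce both properties for $\underline{T}_\sigma$ to the corresponding properties of the flow $\Phi_\lambda$ on the whole nilmanifold $\underline{\boldsymbol{X}}$, and then transport them through the suspension picture furnished by the constant return time.

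First I would check that $\Phi_\lambda$ is minimal and uniquely ergodic on $\underline{\boldsymbol{X}}$. By Theorem \ref{nil:th:AGH} this amounts to showing that the coordinates $\alpha$ and $\beta$ of the generating eigenvector are linearly independent over $\mathbb{Q}$. Since $(\alpha,\beta)$ is an eigenvector of the integer matrix $M_\sigma$ for the eigenvalue $\lambda$, the second eigenvalue equation gives $C\alpha = (\lambda - D)\beta$, so that $\alpha/\beta = (\lambda - D)/C$ whenever $C\neq 0$; and $C=0$ would make $M_\sigma$ triangular with integer eigenvalues, contradicting $\lambda\notin\mathbb{Q}$. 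As $\lambda$ is irrational, $\alpha/\beta$ is irrational, both coordinates are nonzero, and the independence hypothesis of Theorem \ref{nil:th:AGH} holds. Hence $\Phi_\lambda$ is minimal and uniquely ergodic on $\underline{\boldsymbol{X}}$.

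It then remains to pass from the flow to its first-return map on the section $\underline{\mathcal{D}}$. For unique ergodicity I would use that, the return time being the constant $1$, the nilmanifold $\underline{\boldsymbol{X}}$ is isomorphic to the suspension of $(\underline{\mathcal{D}},\underline{T}_\sigma)$ under the constant roof function $1$; invariant probability measures of $\underline{T}_\sigma$ on $\underline{\mathcal{D}}$ are then in one-to-one correspondence with the $\Phi_\lambda$-invariant probability measures on $\underline{\boldsymbol{X}}$, so that the uniqueness of the latter yields the uniqueness of the former. For minimality I would argue that, $\underline{\mathcal{D}}$ being a genuine cross-section transverse to $\Phi_\lambda$, any flow orbit accumulating on a point $y\in\underline{\mathcal{D}}$ crosses $\underline{\mathcal{D}}$ arbitrarily close to $y$; since the crossing times are exactly the integers, these crossings are the iterates $\underline{T}_\sigma^{\,n}(x)$, and the density of each flow orbit (minimality of $\Phi_\lambda$) forces the density of each $\underline{T}_\sigma$-orbit in $\underline{\mathcal{D}}$. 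This gives minimality, and together with the previous paragraph completes the proof that $\underline{T}_\sigma$ is minimal and uniquely ergodic.

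The step I expect to be the most delicate is this last transfer from the flow to the return map: one must make sure that the constant return time really identifies $\underline{\boldsymbol{X}}$ with a suspension over $\underline{\mathcal{D}}$, so that the correspondence of invariant measures is exact, and that the transversality of the section is used correctly to convert accumulation of a flow orbit at a point of $\underline{\mathcal{D}}$ into accumulation of the discrete orbit. By contrast, the verification that $\alpha$ and $\beta$ are rationally independent, and hence the appeal to Theorem \ref{nil:th:AGH}, is routine.
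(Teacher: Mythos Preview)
Your argument is correct, and it follows the same overall plan as the paper: reduce to unique ergodicity of the ambient nilflow $\Phi_\lambda$ via Theorem~\ref{nil:th:AGH}, after checking that $\alpha/\beta$ is irrational. The verification of irrationality is essentially the same in both cases (the paper uses the first row of the eigenvector equation, you use the second).

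The one genuine difference is how the transfer from the flow to the section is organised. You exploit directly the fact, recorded in Proposition~\ref{nil:th:niltran}, that $\underline{\mathcal D}$ is a section with \emph{constant} return time~$1$: this makes $\underline{\boldsymbol X}$ the suspension of $(\underline{\mathcal D},\underline{T}_\sigma)$ with constant roof, so both minimality and unique ergodicity pass from the flow to the return map by the standard suspension correspondence. The paper instead transports the hypothetical pair of invariant measures through the conjugacy $\psi$ back to the section $\Sigma$ (where the return time takes the two values $t_a,t_b$), and there builds the flow-invariant measures on $\mathcal T$ by integrating along flow segments of length $t_a$ or $t_b$; for minimality it does not argue from minimality of the flow at all, but invokes Lesigne's Theorem~\ref{th:emmanuelnil} to deduce minimality once unique ergodicity is known. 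Your route is shorter and avoids the detour through $\Sigma$; the paper's route has the advantage of reusing the map $\psi$ already constructed in the proof of Proposition~\ref{nil:th:niltran} and of showing how Lesigne's theorem fits into the picture.
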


\begin{proof}
We have already seen in the previous theorem that this application is self-induced.  To prove the result, by Theorem \ref{th:emmanuelnil} of E. Lesigne, 
we just have to check that this map is uniquely ergodic.  Since this mapping is continuous, we know that there is at least one invariant measure. 
Suppose there are two, denoted $\mu_1$ and $\mu_2$. We choose them disctinct and ergodic.
The map $\psi$ transports this measure into two different measure $\mu_1 ^\star$ and $\mu_2 ^\star$ on $\Sigma$.
We denote by $\Sigma_a$ (respectively $\Sigma _b$), the set of elements  $\boldsymbol{x}$ in $\Sigma$
such that $\boldsymbol{p}(\boldsymbol{x})\in \mathcal D_a$ (respectively  $\boldsymbol{p}(\boldsymbol{x})\in \mathcal D_b$).
We define two singular measures  $\nu_1$ and $\nu_2$ on $\mathcal T$ as follows. For any continuous
function $f$ on $\mathcal T$, we define a function  $\widetilde f$, measurable on $\Sigma$ by :

$$
\widetilde{ f} (\boldsymbol{x}) = \int \limits_0 ^{t_b} f \left( \Phi_\lambda ^t (\boldsymbol{x}) \right) dt
\mbox{ if } \boldsymbol{x} \in \Sigma_a,
\mbox{ and }
\widetilde{ f} (\boldsymbol{x})  = \int \limits_0 ^{t_a} f \left( \Phi_\lambda ^t (\boldsymbol{x}) \right) dt
\mbox{ if } \boldsymbol{x} \in \Sigma_b.
$$
The measures $\nu_1$ and $\nu_2$
are then defined by: 
$\nu_1(f) = \mu_1 ^\star (\widetilde f)
\mbox{ and }
\nu_2(f) = \mu_2 ^\star (\widetilde f).$
\vspace{6pt}
\\
These measures are invariant 
by the action of the flow $\Phi_\lambda$. To conclude, we only have to verify that 
this flow is uniquely ergodic.
Theorem \ref{nil:th:AGH}
of L. Auslander, L. Green and F. Hahn
assures us that it suffices to show that the ratio $\frac{\alpha}{\beta}$
is irrational. 
If this ratio is rational, this implies that the eigenvalue
$\lambda = m^{a,a}+ m^{a,b} \frac{\alpha}{\beta}$ is itself rational, 
which is absurd since it is a root of an irreducible polynomial of degree $2$
in $\mathbb Z [X]$.
\end{proof}


\section{More about self-induction}
\label{se:fin}
\noindent
We will now consider to a partial converse of Theorem \ref{th7}, and we will see that there is an obvious obstruction.
\vspace{6pt}
\\
We fix a matrix $M= \begin{pmatrix} A & B \\ C & D \end{pmatrix}$, $\alpha$ and $\beta$ as in Theorem \ref{th7}.
We put $\gamma_0 =  -  \frac{ \alpha A C+ \beta B D }{2\lambda -2 \det(M)}$.
For any $x\in \mathbb R$, we defined a homeomorphism $C_x$ of $ \underline{\boldsymbol{X}}$ by 
$ C_x (\boldsymbol{y} \bullet \boldsymbol{\Gamma} )= \begin{bmatrix} x \\ 0 \\ 0 \end{bmatrix} \bullet  \boldsymbol{y} \bullet \boldsymbol{\Gamma}$.
\vspace{6pt}
\\
For any $x\in \mathbb R$, the niltranslations by $\boldsymbol{g}=\begin{bmatrix} \alpha \\ \beta \\ \gamma + \frac{\alpha\beta}{2} \end{bmatrix} $
and  $\boldsymbol{g}(x)=\begin{bmatrix} \alpha \\ \beta \\ \gamma + \frac{\alpha\beta}{2} + \beta x \end{bmatrix} $ are conjugate via $C_x$.
We fix $x_0$ such that for any $(n,m)\in \mathbb Z^2$,
$\gamma + \beta x_0 \neq    \frac{ \alpha}{\lambda - \det(M)}  \left(  n - \frac{AC}{2} \right) +  \frac{ \beta }{\lambda - \det(M)}  \left( m - \frac{BD}{2} \right)$.
From the work developed in Section \ref{nil:se:flot}, the niltranslation by 
$\boldsymbol{g}(x_0)$
is self-induced, and is  the return  map of a nilflow not periodic under renormalization.
\vspace{6pt}
\\
In the proof of Proposition \ref{nil:th:niltran},  we explicity constructed the renormalization map.
A serious problem of our work is that unlike the abelian case, this application is not a morphism.
However, we believe that there is a partial converse of the theorem, but it is difficult to imagine what kind of renormalizations involved.
\vspace{6pt}
\\
We conclude by constructing an example of self-induced niltranslation, for which the areas of induction does not project well on abelianisation.
\vspace{6pt}
\\
To simplify the notation, as we have seen in the section, we will be interested by the application
$T_\phi$, from $\mathbb R^2/\mathbb Z^2$ into itself, defined by
$T_\phi(x,y) =( \ x+1/\phi^2,y+x-1/(2\phi^3) \ )$. 
We saw in Section \ref{nil:se:ex} that it is self-induced, and that it was equivalent to consider this application, or the associated niltranslation.
We consider the quadratic functions $ p $, $ q $ and $ r $, defined for all real $ x $ by:
$$
p (x)=\phi^2x^2/2-\phi x/2-1/\phi, \
q (x)=p(x)+\phi^2x +3/2
\mbox{ and }r (x)=p(x)-\phi^2x +1+1/(2\phi^3).
$$
We defined two aeras  $D_1$ and  $D_2$ by:
$$
\begin{array}{clclcl}
& &D_1 & = & \Big{\{} \ (x,y)\ ;\ p (x)<y\leq p (x)+1 \mbox{ and } y \leq \min(q (x),r (x)-1) \  \Big{\}} \\
& \mbox{ and }& D_2 & = &  \Big{\{} \ (x,y)\ ;\ p (x)<y\leq p(x)+1 \mbox{ and } r (x)-1<y \leq r (x) \  \Big{\}}  .
\end{array}
$$
We define $R$, the application from $D=D_1\cup D_2$ into itself by  $R (\boldsymbol{x})=T_\phi(\boldsymbol{x})$,
if $\boldsymbol{x}\in D_1$, and  $R (\boldsymbol{x})=T_\phi(\boldsymbol{x})-(1,0)$
if $\boldsymbol{x}\in D_2$. 
The dynamical system engendered by $R$ is topologically conjugate to that engendered by $T_\phi$.
\begin{figure}[H]
\centering
\includegraphics[width=12cm]{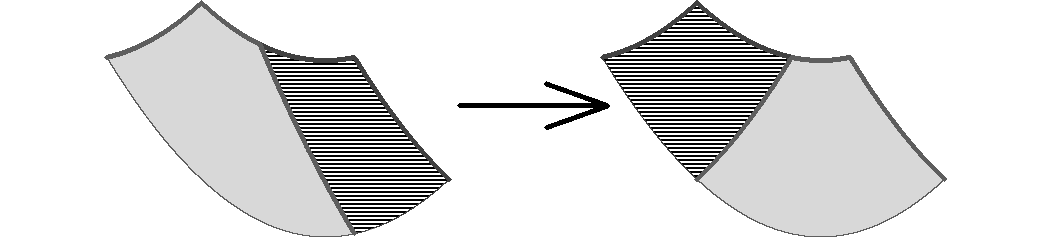}
\caption{Representation of $R$.}
\label{fig:e1}
\end{figure}
\begin{proposition}
The first return map of $R$ into $D_2$, is conjugate to $R$.
\end{proposition}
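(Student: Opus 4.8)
The plan is to exploit the self-induction of $T_\phi$ already established in Section~\ref{nil:se:ex}, together with the stated conjugacy between $R$ on $D$ and $T_\phi$ on $\mathbb R^2/\mathbb Z^2$. Since $R$ is nothing but $T_\phi$ read in the fundamental domain $D = D_1 \cup D_2$ (the correction $-(1,0)$ on $D_2$ being exactly the wrap-around that keeps the orbit inside $D$), it suffices to analyse the first return of $R$ to the subdomain $D_2$ and to produce an explicit homeomorphism intertwining this return map with $R$ itself.

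First I would pin down the geometry of $D_1$ and $D_2$. Both are bounded below and above by the translates $y = p(x)$ and $y = p(x)+1$ of a single parabola, and are separated by the cut $y = r(x)-1$, while the auxiliary function $q$ governs the further subdivision of $D_1$. I would then check that $R$ maps $D$ bijectively onto itself, which amounts to verifying, from the explicit form $T_\phi(x,y) = (x+1/\phi^2,\, y + x - 1/(2\phi^3))$, that the images of the three parabolic boundaries land correctly; the identity $1/\phi^2 = 2-\phi$ and the quadratic relations satisfied by $p,q,r$ make these checks purely algebraic.

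Next I would compute the first return map $R_{\mathrm{ind}}$ of $R$ to $D_2$. As in the computation of $n_y$ in the proof of Proposition~\ref{nil:th:nil}, I expect the return time to depend only on the horizontal position and to take finitely many values (presumably two), partitioning $D_2$ into two pieces on which $R_{\mathrm{ind}}$ is a composition of two or three successive branches of $R$. The bookkeeping is the substantive step: for each piece one must track the sequence of subregions $D_1,D_2$ visited before the orbit returns to $D_2$, and accumulate the corresponding affine corrections in the $y$-coordinate. This is where the parabolic cuts must be handled carefully, since $R_{\mathrm{ind}}$ must again respect boundaries of the form $y = p(x)+\text{const}$.

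Finally I would exhibit the conjugacy. Because the whole configuration is self-similar under the golden-mean renormalization that underlies the self-induction of $T_\phi$, the conjugating homeomorphism $\Theta : D \to D_2$ should be essentially that renormalization map: an affine rescaling of the horizontal coordinate by a power of $\phi$ together with the compensating quadratic adjustment of the vertical coordinate, of the same shape as the transfer map $\Phi$ used in Proposition~\ref{nil:th:nil}. The claim then reduces to checking the intertwining $\Theta \circ R = R_{\mathrm{ind}} \circ \Theta$ branch by branch. The main obstacle is precisely this matching: verifying that $\Theta$ carries the parabolic cuts defining $D_1,D_2$ onto the cuts produced by the return map, and that the piecewise quadratic vertical corrections agree modulo~$1$. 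Once the branches match, continuity and injectivity of $\Theta$ yield the topological conjugacy, showing that $R$ is self-induced on the region $D_2$, which does not project well onto the abelianisation.
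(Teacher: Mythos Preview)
Your outline is in the right spirit but two of your concrete expectations are wrong, and both matter for the computation. First, the return time of $R$ to $D_2$ does \emph{not} depend only on the horizontal coordinate, and it does not take just two values: it takes four. After the parabolas are straightened (see below), the subdomain corresponding to $D_2$ is a strip bounded by two parallel lines of slope $-\phi^2$, and the return time is constant along lines of that same slope, hence genuinely depends on a combination of both coordinates. The analogy with the computation of $n_y$ in Proposition~\ref{nil:th:nil} breaks down precisely here: that earlier induction was on a coordinate strip, whereas $D_2$ is a skew strip in the straightened picture. If you proceed with your plan expecting two horizontal pieces, the branch-matching in your final step will simply fail.

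The paper avoids your heavier algebra by inserting an extra conjugation before computing the return map: it first applies $\psi(x,y)=(x,\,y-p(x))$, which removes the parabolic curvature and sends $D$ onto an isosceles trapezoid $D'=D_1'\cup D_2'$ with straight edges. On this trapezoid the two branches of $R'$ become a pure horizontal shift $R_1'(x,y)=(x+1/\phi^2,\,y)$ on $D_1'$ and an affine map $R_2'$ on $D_2'$. The first return of $R'$ to $D_2'$ is then visibly ${R_1'}^{\,n}\circ R_2'$ with $n\in\{0,1,2,3\}$ (four pieces, cf.\ Figure~\ref{fig:imm}), and the final conjugacy back to $T_\phi$ is the purely linear rescaling $\overline{\psi}(x,y)=(\phi^2 x,\,y)$, with a one-line check that $\overline{\psi}\circ {R_1'}^{\,n}\circ R_2'\circ\overline{\psi}^{-1}=T_\phi+(n-2,0)$. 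Your proposed single conjugacy $\Theta$ with a quadratic vertical correction is exactly the composite $\overline{\psi}\circ\psi$ restricted appropriately; splitting it as the paper does is what makes the four-piece return map transparent and the verification a two-line computation rather than a branch-by-branch slog through parabolic boundaries.
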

\begin{proof}
We begin by conjugate $R$ with an application defined on an isosceles trapezoid, vie 
$\psi(x,y)=(x,y-p(x))$.
We fix
$$
\begin{array}{clclcl}
& &D_1 '& = & \Big{\{} \ (x,y)\ ;\ 0 <y \leq 1 \mbox{ and } y \leq \min(\phi^2x +3/2,-\phi^2x +1/(2\phi^3) ) \  \Big{\}} \\
& \mbox{ and }& D_2' & = &  \Big{\{} \ (x,y)\ ;\ 0<y\leq 1 \mbox{ and }  -\phi^2x +1/(2\phi^3)  <y \leq -\phi^2x +1+1/(2\phi^3) \  \Big{\}}  .
\end{array}
$$
We define $R'$, the application from $D'=D_1'\cup D_2'$ into itself by  $R' (x,y)=R'_1(x,y)=(x+\frac{1}{\phi^2},y)$,
if $(x,y)\in D_1'$, and  $R' (x,y)=R'_2(x,y)=(x+1/\phi^2-1,y+\phi^2 x-1/(2\phi^3))$ if $(x,y) \in D_2'$. 
\begin{figure}[H]
\centering
\includegraphics[width=12cm]{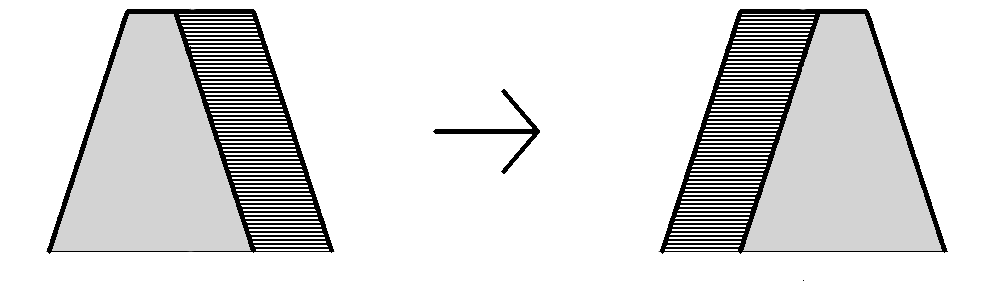}
\caption{Representation of $R'$.}
\label{fig:im}
\end{figure}
\noindent
The first return map of $R$ into $D_2$, is conjugate to the first return map of $R'$ into $D_2'$.
We can compute that for any $\boldsymbol{x}$ in $D_2'$, it exists an integer
$n_{\boldsymbol{x}} \in \{0,1,2,3\}$ such that the first return map of $R'$ into $D_2'$
is equal to ${R_1'}^n \circ R_2'(x,y)$.
\begin{figure}[H]
\centering
\includegraphics[width=9cm]{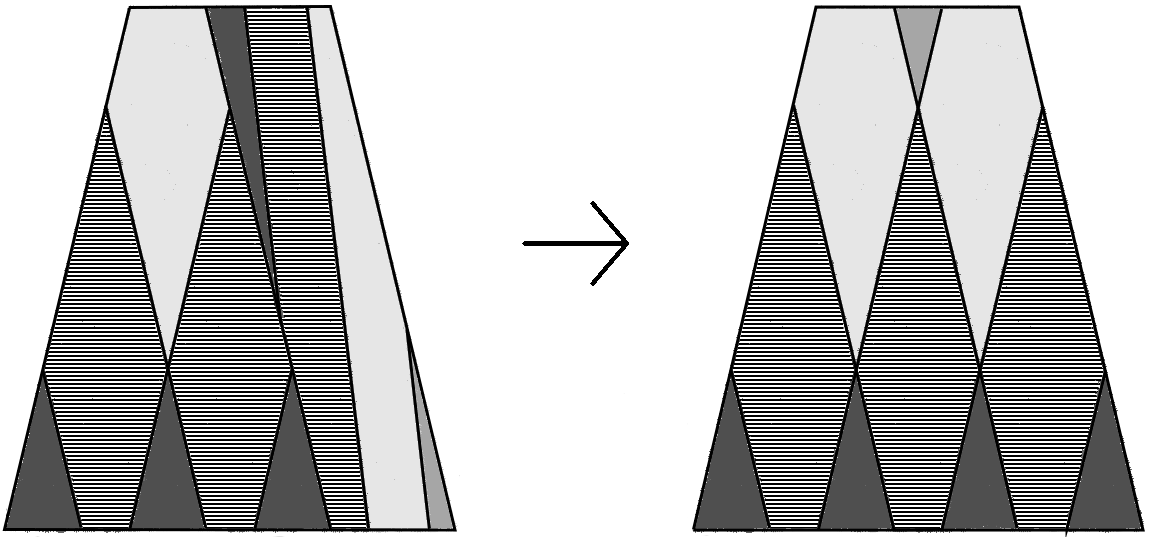}
\caption{Representation of the first return time map of $R'$ from $D_2'$ into itself.}
\label{fig:imm}
\end{figure}
\noindent
We put $\overline{\psi}$ the application from $D_2'$ into $\mathbb R^2$
defined by $\overline{\psi}(x,y)=(\phi^2 x,y)$. 
It is not hard to see that $\overline{\psi}(D_2')$ is a fundamental domain for the torus.
To conclude the proof, we have to verify that for any $(x,y)\in \overline{\psi}(D_2')$, for any $n\in\{0,1,2,3\}$,
we have $\psi \circ {R_1'}^n \circ R_2' \ (x/\phi^2,y)=T_\phi(x,y)+(n-2,0)$.
$$
\begin{array}{clll}
\psi \circ {R_1'}^n \circ R_2' \ \begin{pmatrix} x/\phi^2 \\ y  \end{pmatrix}
&=& \psi \circ {R_1'}^n \   \begin{pmatrix} x/\phi^2+1/\phi^2-1 \\ y+ x-1/(2\phi^3) \end{pmatrix}
= \psi \   \begin{pmatrix} x/\phi^2+(n+1) /\phi^2-1 \\ y+ x-1/(2\phi^3)  \end{pmatrix}
=\begin{pmatrix} x +(n+1) -\phi^2 \\ y+ x-1/(2\phi^3) \end{pmatrix} \\
& = & \begin{pmatrix} x +(n+1) -3+1/\phi^2 \\ y+ x-1/(2\phi^3) \end{pmatrix} = T_\phi(x,y) +(n-2,0).
\end{array}
$$
These first return applications are very close to those studied by P. Arnoux and C. Mauduit in \cite{MR1390569}.
\end{proof}

\noindent
\textit{\textbf{Acknowledgements:}}
I would like to thank Xavier Bressaud, Livio Flaminio, Pascal Hubert and Serge Troubetzkoy for
useful discussions and comments.
I am also grateful to the anonymous referee 
for valuable suggestions.
\bibliography{biblio}

\end{document}